\newtheorem{lem}{Lemma}
\newtheorem{theorem}{Theorem}
\newtheorem{assump}{Assumption}
\newcommand{\vast}{\bBigg@{4}}
\newcommand{\Vast}{\bBigg@{5}}
\def\mbb{\mathbb}
\def\mb{\mathbf}
\def\mc{\mathcal}
\begin{document}
	\title{Distributed Subgradient Projection Algorithm over Directed Graphs} 
	\author{Chenguang Xi,~\emph{Student Member,~IEEE}, and Usman A. Khan,~\emph{Senior Member,~IEEE}
		\thanks{
			The authors are with the ECE Department at Tufts University, Medford, MA; {\texttt{chenguang.xi@tufts.edu, khan@ece.tufts.edu}}. This work has been partially supported by an NSF Career Award \# CCF-1350264.}
	}
	
	\maketitle
	\begin{abstract}
		We propose Directed-Distributed Projected Subgradient (D-DPS) to solve a constrained optimization problem over a multi-agent network, where the goal of agents is to collectively minimize the sum of locally known convex functions. Each agent in the network owns only its local objective function, constrained to a commonly known convex set. We focus on the circumstance when communications between agents are described by a \emph{directed} network. The D-DPS combines surplus consensus to overcome the asymmetry caused by the directed communication network. The analysis shows the convergence rate to be $O(\frac{\ln k}{\sqrt{k}})$.
	\end{abstract}
	
	\begin{IEEEkeywords}
		Distributed optimization; constrained optimization; directed graphs;  projected subgradient.
	\end{IEEEkeywords}

\section{Introduction}\label{s1}
We focus on distributed methods to solve constrained minimization of a sum of convex functions, where each component is known only to a specific agent in a multi-agent network. The formulation has applications in, e.g., distributed sensor networks,~\cite{distributed_Rabbit}, machine learning,~\cite{distributed_Cevher,distributed_Mateos}, and low-rank matrix completion,~\cite{distributed_Ling}. Most existing algorithms assume the information exchange over undirected networks, i.e., if agent~$i$ can send information to agent~$j$, then agent~$j$ can also send information to agent~$i$. In many other realistic scenarios, however, the underlying graph may be directed. In the following, we summarize related literature on distributed optimization over multi-agent networks, which is either undirected or directed. 

\textbf{Undirected Graphs:} The corresponding problem over undirected graphs can fall into either the primal or the dual formulation, the choice of which depends on the mathematical nature of the applications. Typical primal domain methods include~\cite{uc_Nedic,cc_nedic,cc_Duchi,cc_Johansson,srivastava2011distributed,ram2010distributed}, where a convergence rate $O(\ln k/\sqrt{k})$ is obtained due to the diminishing step-size. To accelerate the rate, Ref.~\cite{fast_Gradient} applies the Nesterov-based method, achieving $O(\ln k/k^2)$ with the Lipschitz continuous gradient assumption. A related algorithm, EXTRA,~\cite{EXTRA}, uses a constant step-size and the gradients of the last two iterates. The method converges linearly under a strong-convexity assumption. The main advantage of primal domain methods is their computational simplicity. Dual domain methods formulate the problem into a constrained model: at each iteration for a fixed dual variable, the primal variables are first solved to minimize some Lagrangian-related functions, then the dual variables are updated accordingly,~\cite{dual_Terelius}. The distributed Alternating Direction Method of Multipliers (ADMM),~\cite{ADMM_Mota,ADMM_Shi,ADMM_Wei,ADMM_jakovetic,bianchi2014stochastic}, modifies traditional dual domain methods by introducing a quadratic regularization term and provides an improvement in the numerical stability. The dual domain methods, including distributed ADMM, are often fast, but comes with a high computation burden. To overcome this, Refs.~\cite{ADMM_Hong, ADMM_Ling} approximate the distributed implementation of ADMM. The computational complexity is similar to the primal domain methods. Random Coordinate Descent Methods,~\cite{Necoara_CDA0,Necoara_CDA1}, are also used in the dual formulation, which are better suited when the dimension of data is very large. 

\textbf{Directed Graphs:} Recent papers,~\cite{opdirect_Nedic,opdirect_Xi,opdirect_Makhdoumi,opdirect_Xi2}, consider distributed optimization over directed graphs. Among them, Refs.~\cite{opdirect_Nedic,opdirect_Xi,opdirect_Makhdoumi} consider nonsmooth optimization problems. Subgradient-Push,~\cite{opdirect_Nedic}, applies the push-sum consensus,~\cite{ac_directed,ac_directed0}, to subgradient-based methods. Directed-Distributed Graident Descent,~\cite{opdirect_Xi}, is another subgradient-based alternative, combining surplus consensus,~\cite{ac_Cai1}. Ref.~\cite{opdirect_Makhdoumi} combines the weight-balancing technique,~\cite{c_Hooi-Tong}, with the subgradient-based method. These subgradient-based method,~\cite{opdirect_Nedic,opdirect_Xi,opdirect_Makhdoumi}, restricted by diminishing step-sizes, converge at~$O(\ln k/\sqrt{k})$. A recent algorithm, DEXTRA,~\cite{opdirect_Xi2}, is a combination of push-sum and EXTRA. It converges linearly under the strong-convexity assumption on the objective functions. In contrast to this work, Refs.~\cite{opdirect_Nedic,opdirect_Xi,opdirect_Makhdoumi,opdirect_Xi2} all solve unconstrained problems.

The major contribution of this paper is to provide and analyze the \emph{constrained} protocol over \emph{directed} graphs, i.e., each agent is constrained to some convex set and the communication is directed. To these aims, we provide and analyze the Directed-Distributed Projected Subgradient (D-DPS) algorithm in this paper. It is worth mentioning that generalizing existing work on unconstrained problems over undirected graphs is non-trivial because of two reasons: (i) the non-expansion property of the projection operation is not directly applicable; and, (ii) the weight matrices cannot be doubly stochastic, due to which the information exchange between two agents is asymmetric. We treat this asymmetry by bringing in ideas from surplus consensus,~\cite{ac_Cai1,opdirect_Xi}. We show that D-DPS converges at $O(\ln k/\sqrt{k})$ for nonsmooth functions.

\textbf{Notation:} We use lowercase bold letters to denote vectors and uppercase italic letters to denote matrices. We denote by $[A]_{ij}$ or $a_{ij}$ the $(i,j)$th element of a matrix,~$A$. An~$n$-dimensional vector with all elements equal to one (zero) is represented by~$\mb{1}_n$ ($\mb{0}_n$). The notation~$0_{n\times n}$ represents an~$n\times n$ matrix with all elements equal to zero, and $I_{n\times n}$ the $n\times n$ identity matrix. The inner product of two vectors~$\mb{x}$ and~$\mb{y}$ is~$\langle\mb{x},\mb{y}\rangle$. We use~$\|\mb{x}\|$ to denote the standard Euclidean norm of $\mb{x}$. For a function $f(\mb{x})$, we denote its subgradient at $\mb{x}$ by $\nabla f(\mb{x})$. Finally, we use $\mc{P}_{\mc{X}}[\mb{x}]$ for the projection of a vector $\mb{x}$ on the set $\mc{X}$, i.e.,  $\mc{P}_{\mc{X}}[\mb{x}]=\arg\min_{\mb{v}\in\mc{X}}\|\mb{v}-\mb{x}\|^2$.

\section{Problem Formulation and Algorithm}\label{s2}
Consider a strongly-connected network of~$n$ agents communicating over a \emph{directed} graph,~$\mc{G}=(\mc{V},\mc{E})$, where~$\mc{V}$ is the set of agents, and~$\mc{E}$ is the collection of ordered pairs,~$(i,j),i,j\in\mc{V}$, such that agent~$j$ can send information to agent~$i$. Define~$\mc{N}_i^{{\scriptsize \mbox{in}}}$ to be the collection of in-neighbors that can send information to agent~$i$. Similarly,~$\mc{N}_i^{{\scriptsize \mbox{out}}}$ is defined as the out-neighbors of agent~$i$. We allow both~$\mc{N}_i^{{\scriptsize \mbox{in}}}$ and~$\mc{N}_i^{{\scriptsize \mbox{out}}}$ to include the node~$i$ itself. In our case, $\mc{N}_i^{{\scriptsize \mbox{in}}}\neq\mc{N}_i^{{\scriptsize \mbox{out}}}$ in general. We focus on solving a constrained convex optimization problem that is distributed over the above multi-agent network. In particular, the network of agents cooperatively solve the following optimization problem:
\begin{align}
\mbox{P1}:\quad&\mbox{minimize }\quad f(\mb{x})=\sum_{i=1}^nf_i(\mb{x}),\qquad\mbox{subject to}\quad\mb{x}\in\mc{X},\nonumber
\end{align}
where each local objective function~$f_i:\mbb{R}^p\rightarrow\mbb{R}$ being convex, not necessarily differentiable, is only known by agent~$i$, and the constrained set, $\mc{X}\subseteq\mbb{R}^p$, is convex and closed.

The goal is to solve problem P1 in a distributed manner such that the agents do not exchange the objective function with each other, but only share their own states with their out-neighbors in each iteration. We adopt the following standard assumptions.
\begin{assump}\label{asp1}
The graph $\mc{G}=(\mc{V},\mc{E})$ is strongly-connected, i.e.,~$\forall i, j\in\mc{V}$, there exists a directed path from $j$ to $i$. 
\end{assump}
\noindent Assumption \ref{asp1} ensures that the information from all agents is disseminated to the whole network such that a consensus can be reached. For example, a directed spanning tree does not satisfy Assumption \ref{asp1} as the root of this tree cannot receive information from any other agent.
\begin{assump}\label{asp2}
	Each function,~$f_i$, is convex, but not necessarily differentiable. The subgradient,~$\nabla f_i(\mb{x})$, is bounded, i.e.,~$\|\nabla f_i(\mb{x})\|\leq B_{f_i}$, $\forall\mb{x}\in\mbb{R}^p$. With~$B=\max_i\{{B_{f_i}}\}$, we have for any~$\mb{x}\in\mbb{R}^{p}$,
	\begin{align}\label{grad}
	\left\|\nabla f_i(\mb{x})\right\|&\leq B,\qquad\forall i\in\mc{V}.
	\end{align}
\end{assump}

\subsection{Algorithm: D-DPS}
Let each agent,~$j\in\mc{V}$, maintain two vectors:~$\mb{x}_j^k$ and~$\mb{y}_j^k$, both in~$\mbb{R}^p$, where~$k$ is the discrete-time index. At the~$k+1$th iteration, agent~$j$ sends its state estimate,~$\mb{x}_j^k$, as well as a weighted auxiliary variable,~$b_{ij}\mb{y}_j^k$, to each out-neighbor\footnote{To implement this, each agent $j$ only need to know its out-degree, and set $b_{ij}=1/|\mc{N}_j^{{\scriptsize \mbox{out}}}|$. This assumption is standard in the related literature regarding distributed optimization over directed graphs,~\cite{opdirect_Nedic,opdirect_Xi,opdirect_Makhdoumi,opdirect_Xi2}}, $i\in\mc{N}_j^{{\scriptsize \mbox{out}}}$, where all those out-weights,~$b_{ij}$'s, of agent $j$ satisfy:

\begin{equation*}
b_{ij}=\left\{
\begin{array}{rl}
>0,&i\in\mc{N}_j^{{\scriptsize \mbox{out}}},\\
0,&\mbox{otw.},
\end{array}
\right.
\qquad
\sum_{i=1}^nb_{ij}=1.
\end{equation*}
Agent~$i$ then updates the variables,~$\mb{x}_i^{k+1}$ and~$\mb{y}_i^{k+1}$, with the information received from its in-neighbors,~$j\in\mc{N}_i^{{\scriptsize \mbox{in}}}$:
\begin{subequations}\label{alg1}
	\begin{align}
	\mb{x}_i^{k+1}&=\mc{P}_{\mc{X}}\left[\sum_{j=1}^na_{ij}\mb{x}_j^k+\epsilon\mb{y}_i^k-\alpha_k\nabla\mb{f}_i^k\right],\label{alg1a}\\
	\mb{y}_i^{k+1}&=\mb{x}_i^k-\sum_{j=1}^na_{ij}\mb{x}_j^k+\sum_{j=1}^n\left(b_{ij}\mb{y}_j^k\right)-\epsilon\mb{y}_i^k,\label{alg1b}
	\end{align}
\end{subequations}
where the in-weights,~$a_{ij}$'s, of agent $i$ satisfy that:
\begin{equation*}
a_{ij}=\left\{
\begin{array}{rl}
>0,&j\in\mc{N}_i^{{\scriptsize \mbox{in}}},\\
0,&\mbox{otw.},
\end{array}
\right.
\qquad
\sum_{j=1}^na_{ij}=1;
\end{equation*}
The scalar,~$\epsilon$, is a small positive constant, of which we will give the range later. The diminishing step-size,~$\alpha_k\geq0$, satisfies the persistence conditions:
$\sum_{k=0}^\infty\alpha_k=\infty;\sum_{k=0}^\infty\alpha_k^2<\infty;$ 
and $\nabla \mb{f}_i^k=\nabla f_i(\mb{x}_i^k)$ represents the subgradient  of $f_i$ at $\mb{x}_i^k$. We provide the proof of D-DPS in Section \ref{s3}, where we show that all agents states converge to some common accumulation state, and the accumulation state converges to the optimal solution of the problem, i.e., $\mb{x}_i^\infty=\mb{x}_j^\infty=\mb{x}^\infty$ and $f(\mb{x}^\infty)=f^*$, $\forall i, j$, where $f^*$ denotes the optimal solution of Problem P1. To facilitate the proof, we present some existing results regarding the convergence of a new weighting matrix, and some inequality satisfied by the projection operator.
\subsection{Preliminaries}
Let $A=\left\{a_{ij}\right\}\in\mbb{R}^{n\times n}$ be some row-stochastic weighting matrix representing the underlying graph $\mc{G}$, and $B=\left\{b_{ij}\right\}\in\mbb{R}^{n\times n}$ be some column-stochastic weighting matrix regarding the same graph $\mc{G}$. Define $M\in\mbb{R}^{2n\times 2n}$ the matrix as follow.
\begin{align}
M&=\left[
\begin{array}{cc}
A & \epsilon I_{n\times n} \\
I_{n\times n}-A & B-\epsilon I_{n\times n} \\
\end{array}
\right],\label{M}
\end{align}
where $\epsilon$ is some arbitrary constant. We next state an existing result from our prior work,~\cite{opdirect_Xi} (Lemma 3), on the convergence performance of $M^\infty$. The convergence of $M$ is originally studied in~\cite{ac_Cai1}, while we show the geometric convergence in~\cite{opdirect_Xi}. Such a matrix $M$ is crucial in the convergence analysis of D-DPS provided in Section \ref{s3}.
\begin{lem}\label{lem_M2}
	Let Assumption \ref{asp1} holds. Let~$M$ be the weighting matrix,~Eq.~\eqref{M}, and the constant~$\epsilon$ in~$M$ satisfy~$\epsilon\in(0,\Upsilon)$, where~$\Upsilon:=\frac{1}{(20+8n)^n}(1-|\lambda_3|)^n$ and~$\lambda_3$ is the third largest eigenvalue of~$M$ by setting~$\epsilon=0$. Then:
	
	\begin{enumerate}[label=(\alph*)]
		\item The sequence of~$\left\{M^k\right\}$, as~$k$ goes to infinity, converges to the following limit:
		\begin{align}
		\lim_{k\rightarrow\infty}M^k=\left[
		\begin{array}{cc}
		\frac{\mb{1}_n\mb{1}_n^\top}{n} & \frac{\mb{1}_n\mb{1}_n^\top}{n} \\
		0 & 0 \\
		\end{array}
		\right];\nonumber
		\end{align}
		\item For all~$i,j\in[1,\ldots,2n]$, the entries~$\left[M^k\right]_{ij}$ converge at a geometric rate, i.e., there exist bounded constants,~$\Gamma\in\mbb{R^+}$, and~$\gamma\in(0,1)$, such that
		\begin{align}
		\left\|M^k-\left[
		\begin{array}{cc}
		\frac{\mb{1}_n\mb{1}_n^\top}{n} & \frac{\mb{1}_n\mb{1}_n^\top}{n} \\
		0 & 0 \\
		\end{array}
		\right]\right\|_{\infty}\leq\Gamma\gamma^k.\nonumber
		\end{align}
	\end{enumerate}
\end{lem}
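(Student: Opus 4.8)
The plan is to reduce the entire statement to a single spectral fact: that the eigenvalue $1$ of $M$ is simple and strictly dominant. First I would exhibit the relevant eigenvectors. Writing $\mb{r}=[\mb{1}_n^\top,\mb{0}_n^\top]^\top$ and $\mb{l}^\top=\tfrac{1}{n}[\mb{1}_n^\top,\mb{1}_n^\top]$, a direct block multiplication using $A\mb{1}_n=\mb{1}_n$ (row-stochasticity) gives $M\mb{r}=\mb{r}$, and using $\mb{1}_n^\top B=\mb{1}_n^\top$ (column-stochasticity) gives $\mb{l}^\top M=\mb{l}^\top$; both hold for every $\epsilon$. Since $\mb{l}^\top\mb{r}=1$, the candidate limit in part (a) is exactly the rank-one spectral projector $\mb{r}\mb{l}^\top$. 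Thus, if I can show that $1$ is a simple eigenvalue of $M$ and every other eigenvalue lies strictly inside the unit disk, then writing $M=\mb{r}\mb{l}^\top+S$ with $S:=M-\mb{r}\mb{l}^\top$, the identities $\mb{l}^\top M=\mb{l}^\top$, $M\mb{r}=\mb{r}$, $\mb{l}^\top\mb{r}=1$ force $(\mb{r}\mb{l}^\top)S=S(\mb{r}\mb{l}^\top)=0$, whence $M^k=\mb{r}\mb{l}^\top+S^k$. Gelfand's formula applied to $S$ then yields $\|S^k\|_\infty\le\Gamma\gamma^k$ for any $\gamma\in(0,1)$ exceeding the spectral radius of $S$, delivering (a) and (b) simultaneously.

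Next I would locate the spectrum by perturbation from $\epsilon=0$. At $\epsilon=0$ the matrix $M$ is block lower-triangular, so $\mathrm{spec}(M)=\mathrm{spec}(A)\cup\mathrm{spec}(B)$. Under Assumption \ref{asp1} both $A$ and $B$ are irreducible stochastic matrices, and because the algorithm admits self-loops ($a_{ii},b_{ii}>0$) they are primitive; by Perron--Frobenius each contributes a single eigenvalue equal to $1$ with all its remaining eigenvalues strictly inside the unit disk. Hence at $\epsilon=0$ the eigenvalue $1$ has algebraic multiplicity exactly two, and the other $2n-2$ eigenvalues lie in the disk of radius $|\lambda_3|<1$. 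Turning on $\epsilon$, one copy of $1$ is pinned at $1$ for all $\epsilon$ by the computation above, so the task becomes showing that the \emph{second} copy detaches into the open unit disk and that no eigenvalue crosses the unit circle while $\epsilon\in(0,\Upsilon)$.

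The main obstacle is the quantitative control of this splitting: the explicit threshold $\Upsilon=\tfrac{1}{(20+8n)^n}(1-|\lambda_3|)^n$ must guarantee uniformity over the whole interval, not merely for infinitesimal $\epsilon$. I would bound the $\epsilon$-dependent change in the coefficients of the characteristic polynomial $\det(\lambda I_{2n}-M)$ and convert this into a root-location estimate through a Rouch\'e-type/continuity argument, where the gap factor $(1-|\lambda_3|)^n$ keeps the roots clustered near $1$ separated from the rest and $(20+8n)^{-n}$ absorbs the dimensional and entrywise constants. A first-order perturbation computation at $\epsilon=0$, differentiating the repeated eigenvalue along the direction of the off-diagonal $\epsilon$-blocks, confirms that the detaching eigenvalue moves inward; the bound $\Upsilon$ then certifies that it stays inside, and that all $2n-2$ remaining eigenvalues stay trapped in a slightly enlarged disk of radius $\gamma<1$, throughout $(0,\Upsilon)$.

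Finally, with simplicity and strict dominance established, I would take $\gamma$ to be any constant strictly between that common dominating radius and $1$, and $\Gamma$ the constant furnished by Gelfand's inequality for $S$ in the $\|\cdot\|_\infty$ norm. Substituting into $M^k=\mb{r}\mb{l}^\top+S^k$ yields both the limit of part (a) and the geometric bound of part (b). I expect the eigenvector identification and the final spectral-projector argument to be routine; the perturbation estimate certifying the explicit range for $\epsilon$ is the genuinely delicate part, which is why the original statement is imported from prior work rather than re-derived here.
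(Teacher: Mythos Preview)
The paper does not supply its own proof of this lemma: immediately after the statement it writes ``The proof and related discussion can be found in~\cite{opdirect_Xi,ac_Cai1},'' so there is nothing in the paper to compare against beyond that citation. Your sketch is in fact a faithful outline of the argument in those references: identify the right and left Perron vectors $\mb{r}=[\mb{1}_n^\top,\mb{0}_n^\top]^\top$ and $\mb{l}=\tfrac{1}{n}[\mb{1}_n^\top,\mb{1}_n^\top]^\top$, observe that the target limit is the rank-one projector $\mb{r}\mb{l}^\top$, and then control the rest of the spectrum by perturbing from the block lower-triangular case $\epsilon=0$, where $\mathrm{spec}(M)=\mathrm{spec}(A)\cup\mathrm{spec}(B)$. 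The decomposition $M^k=\mb{r}\mb{l}^\top+S^k$ and the appeal to Gelfand's formula are exactly how the geometric rate is extracted in~\cite{opdirect_Xi}. You also correctly flag that the quantitative threshold $\Upsilon$ is the delicate part and is precisely why the result is imported rather than re-derived.

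One small caveat: you justify primitivity of $A$ and $B$ by asserting $a_{ii},b_{ii}>0$, but the paper only says that $\mc{N}_i^{\mbox{\scriptsize in}}$ and $\mc{N}_i^{\mbox{\scriptsize out}}$ are \emph{allowed} to contain $i$, not that they must. Without positive diagonals, irreducibility alone does not rule out other unit-modulus eigenvalues, and then $|\lambda_3|<1$ can fail. The cited works in fact take positive diagonals as a standing hypothesis, so your argument is fine provided you state this as an assumption rather than infer it from the present paper's algorithm description.
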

The proof and related discussion can be found in~\cite{opdirect_Xi,ac_Cai1}. The next lemma regarding the projection operator is from~\cite{cc_nedic}.
\begin{lem}\label{lem_NonexpanBregman}
	Let~$\mc{X}$ be a non-empty closed convex set in~$\mbb{R}^p$. For any vector~$\mb{y}\in\mc{X}$ and~$\mb{x}\in\mbb{R}^p$, it satisfies:
	\begin{enumerate}[label=(\alph*)]
		\item~$\left\langle\mb{y}-\mc{P}_{\mc{X}}\left[\mb{x}\right],\mb{x}-\mc{P}_{\mc{X}}\left[\mb{x}\right]\right\rangle\leq 0$.
		\item~$\left\|\mc{P}_{\mc{X}}\left[\mb{x}\right]-\mb{y}\right\|^2\leq\left\|\mb{x}-\mb{y}\right\|^2-\left\|\mc{P}_{\mc{X}}\left[\mb{x}\right]-\mb{x}\right\|^2$.\nonumber
	\end{enumerate}
\end{lem}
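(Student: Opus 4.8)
The plan is to derive both parts from the first-order (variational) characterization of the Euclidean projection, using part (a) as the workhorse and obtaining part (b) by a one-line expansion. Throughout I would abbreviate $\mb{z}:=\mc{P}_{\mc{X}}[\mb{x}]$, which is well-defined and unique since $\mc{X}$ is non-empty, closed, and convex, and $\mb{v}\mapsto\|\mb{v}-\mb{x}\|^2$ is strictly convex and coercive.

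For part (a), the key idea is to exploit the convexity of $\mc{X}$ to construct a feasible direction pointing toward $\mb{y}$ and then to use that $\mb{z}$ minimizes the squared distance. First I would fix any $\mb{y}\in\mc{X}$ and, for $t\in[0,1]$, consider the point $\mb{z}+t(\mb{y}-\mb{z})$, which lies in $\mc{X}$ by convexity. Next I would define $g(t):=\|\mb{z}+t(\mb{y}-\mb{z})-\mb{x}\|^2$ and expand it as $\|\mb{z}-\mb{x}\|^2+2t\langle\mb{z}-\mb{x},\mb{y}-\mb{z}\rangle+t^2\|\mb{y}-\mb{z}\|^2$. Since $\mb{z}$ is the minimizer of the distance over $\mc{X}$, we have $g(0)\leq g(t)$ for all $t\in[0,1]$, so the one-sided derivative satisfies $g'(0)\geq0$, which gives $\langle\mb{z}-\mb{x},\mb{y}-\mb{z}\rangle\geq0$. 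Rewriting this as $\langle\mb{y}-\mb{z},\mb{x}-\mb{z}\rangle\leq0$ is precisely statement (a).

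For part (b), the remaining step is purely algebraic and leans on (a). I would expand $\|\mb{x}-\mb{y}\|^2=\|(\mb{x}-\mb{z})+(\mb{z}-\mb{y})\|^2=\|\mb{x}-\mb{z}\|^2+2\langle\mb{x}-\mb{z},\mb{z}-\mb{y}\rangle+\|\mb{z}-\mb{y}\|^2$ and note that the cross term obeys $\langle\mb{x}-\mb{z},\mb{z}-\mb{y}\rangle=-\langle\mb{y}-\mb{z},\mb{x}-\mb{z}\rangle\geq0$ by part (a). Dropping this non-negative term and rearranging yields $\|\mb{z}-\mb{y}\|^2\leq\|\mb{x}-\mb{y}\|^2-\|\mb{x}-\mb{z}\|^2$, which is exactly (b) after substituting back $\mb{z}=\mc{P}_{\mc{X}}[\mb{x}]$.

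I do not anticipate a genuine obstacle here: the only point that demands care is the first-order argument in part (a), where one must verify that $\mb{z}+t(\mb{y}-\mb{z})$ is admissible (a direct consequence of convexity) and that the inequality $g'(0)\geq0$ is legitimately read off from the minimality of $g$ on the closed interval $[0,1]$. Everything else reduces to expanding squared norms and invoking (a); in particular, part (b) is immediate once (a) is established.
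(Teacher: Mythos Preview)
Your argument is correct and is the standard derivation of the projection variational inequality and the resulting Pythagorean-type bound. Note, however, that the paper does not supply its own proof of this lemma: it simply cites \cite{cc_nedic} and uses the result as a black box, so there is nothing to compare against beyond saying that your proof is exactly the classical one underlying that reference.
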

\section{Convergence Analysis}\label{s3}
To analyze D-DPS, we write Eq.~\eqref{alg1} in a compact form. We denote~$\mb{z}_i^k\in\mbb{R}^p$,~$\mb{g}_i^k\in\mbb{R}^p$ as
\begin{align}
\mb{z}_i^k &= \left\{
\begin{array}{l r}
\mb{x}_i^k, ~~~~~~~~~~~~~ 1\leq i\leq n,&\\
\mb{y}_{i-n}^k, ~~~~ n+1\leq i\leq2n,&
\end{array}
\right.\notag\\
\mb{g}_i^k &=
\left\{
\begin{array}{l r}
\mb{x}_i^{k+1}-\sum\limits_{j=1}^na_{ij}\mb{x}_j^k-\epsilon\mb{y}_i^k,~~~~1\leq i\leq n,&\\
\mb{0}_p,~~~~~~~~~~~~~~~~~~~~~~~~~~ n+1\leq i\leq2n,&\\
\end{array} \right.\label{g}
\end{align}
and $A=\{a_{ij}\},B=\{b_{ij}\}$, and $M=\{m_{ij}\}$ collect the weights from Eqs.~\eqref{alg1} and~\eqref{M}. We now represent Eq.~\eqref{alg1} as follows: for any~$i\in\{1,...,2n\}$, at~$k+1$th iteration,
\begin{align}\label{alg2}
\mb{z}_i^{k+1}=\sum_{j=1}^{2n}m_{ij}\mb{z}_j^{k}+\mb{g}_i^k,
\end{align}
where we refer to~$\mb{g}_i^k$ as the \textit{perturbation}. Eq.~\eqref{alg2} can be viewed as a distributed subgradient method,~\cite{uc_Nedic}, where the doubly stochastic matrix is substituted with the new weighting matrix,~$M$, Eq.~\eqref{M}, and the subgradient is replaced by the perturbation,~$\mb{g}_i^k$. We summarize the spirit of the upcoming convergence proof, which consists of proving both the consensus property and the optimality property of D-DPS. As to the consensus property, we show that the disagreement between estimates of agents goes to zero, i.e., $\lim_{k\rightarrow\infty}\|\mb{x}_i^k-\mb{x}_j^k\|=0$, $\forall i, j\in\mc{V}$. More specifically, we show that the limit of agent estimates converge to some accumulation state, $\overline{\mb{z}}^k=\frac{1}{n}\sum_{i=1}^{2n}\mb{z}_i^k$, i.e., $\lim_{k\rightarrow\infty}\|\mb{x}_i^k-\overline{\mb{z}}^k\|=0$, $\forall i$, and the agents additional variables go to zero, i.e., $\lim_{k\rightarrow\infty}\|\mb{y}_i^k\|=0$, $\forall i$. Based on the consensus property, we next show the optimality property that the difference between the objective function evaluated at the accumulation state and the optimal solution goes to zero, i.e., $\lim_{k\rightarrow\infty}f(\overline{\mb{z}}^k)=f^*$.

We formally define the accumulation state $\overline{\mb{z}}^k$ as follow,
\begin{align}\label{z}
\overline{\mb{z}}^k=\frac{1}{n}\sum_{i=1}^{2n}\mb{z}_i^k=\frac{1}{n}\sum_{i=1}^{n}\mb{x}_i^k+\frac{1}{n}\sum_{i=1}^{n}\mb{y}_i^k.
\end{align}
The following lemma regarding~$\mb{x}_i^k$,~$\mb{y}_i^k$, and~$\overline{\mb{z}}^k$ is straightforward. We assume that all of the initial states of agents are zero, i.e.,~$\mb{z}_i^k=\mb{0}_p$,~$\forall i$, for the sake of simplicity in the representation of proof. 
\begin{lem}\label{lem_consensus}
	Let Assumptions \ref{asp1}, \ref{asp2} hold. Then, there exist some bounded constants,~$\Gamma>0$ and~$0<\gamma<1$, such that:
	\begin{enumerate}[label=(\alph*)]
		\item for all $i\in\mc{V}$ and $k\geq 0$, the agent estimate satisfies\footnote{In this paper, we allow the notation that the superscript of sum being smaller than its subscript. In particular, for any sequence $\{\mb{s}_k\}$, we have $\sum_{k=k_1}^{k_2}\mb{s}_k=0$, if $k_2<k_1$. Besides, we denote in this paper for convenience that $\mb{g}_i^{-1}=\mb{0}_p$, $\forall i$}
		\begin{align}
		\left\|\mb{x}_i^k-\overline{\mb{z}}^k\right\|\leq&\Gamma\sum_{r=1}^{k-1}\gamma^{k-r}\sum_{j=1}^n\left\|\mb{g}_j^{r-1}\right\|+\sum_{j=1}^n\left\|\mb{g}_j^{k-1}\right\|;\nonumber
		\end{align}
		
		\item for all $i\in\mc{V}$ and $k\geq 0$, the additional variable satisfies
		\begin{align}
		\left\|\mb{y}_i^k\right\|\leq\Gamma\sum_{r=1}^{k-1}\gamma^{k-r}\sum_{j=1}^n\left\|\mb{g}_j^{r-1}\right\|.\nonumber
		\end{align}
	\end{enumerate}
\end{lem}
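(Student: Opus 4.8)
The plan is to convert the coupled recursion into a single linear time-invariant iteration driven by the perturbation $\mb{g}$, unroll it explicitly from the zero initial condition, and then read off both bounds by comparing the transition matrix $M^{k-1-r}$ against its limit (call it $M^\infty$) from Lemma~\ref{lem_M2}. First I would stack the $2n$ blocks into $\mb{z}^k=(\mb{z}_1^k,\ldots,\mb{z}_{2n}^k)$ and write Eq.~\eqref{alg2} as $\mb{z}^{k+1}=M\mb{z}^k+\mb{g}^k$. Since all initial states vanish, iterating gives the closed form $\mb{z}^k=\sum_{r=0}^{k-1}M^{k-1-r}\mb{g}^r$, i.e., for each block index $i$,
\[
\mb{z}_i^k=\sum_{r=0}^{k-1}\sum_{j=1}^{2n}\left[M^{k-1-r}\right]_{ij}\mb{g}_j^r=\sum_{r=0}^{k-1}\sum_{j=1}^{n}\left[M^{k-1-r}\right]_{ij}\mb{g}_j^r,
\]
where the second equality uses the definition of $\mb{g}_j^r$ in Eq.~\eqref{g}, which equals $\mb{0}_p$ for $j\in\{n+1,\ldots,2n\}$.

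Next I would pin down $\overline{\mb{z}}^k$. A direct check of the block structure of $M$ in Eq.~\eqref{M} shows that $M$ is column-stochastic, $\mb{1}_{2n}^\top M=\mb{1}_{2n}^\top$: the first $n$ columns sum to $\mb{1}_n^\top A+\mb{1}_n^\top(I-A)=\mb{1}_n^\top$, and the last $n$ columns sum to $\epsilon\mb{1}_n^\top+(\mb{1}_n^\top B-\epsilon\mb{1}_n^\top)=\mb{1}_n^\top$ by column-stochasticity of $B$. Hence $\sum_{i=1}^{2n}[M^{k-1-r}]_{ij}=1$ for every $j$, so summing the closed form over $i$ and dividing by $n$ yields $\overline{\mb{z}}^k=\tfrac1n\sum_{r=0}^{k-1}\sum_{j=1}^n\mb{g}_j^r$. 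Crucially, for any fixed $i\le n$ this equals $\sum_{r=0}^{k-1}\sum_{j=1}^n[M^\infty]_{ij}\mb{g}_j^r$, because the top-left block of the limit in Lemma~\ref{lem_M2}(a) is exactly $\tfrac1n\mb{1}_n\mb{1}_n^\top$ (independent of $i$). Subtracting, for $1\le i\le n$ (so $\mb{x}_i^k=\mb{z}_i^k$),
\[
\mb{x}_i^k-\overline{\mb{z}}^k=\sum_{r=0}^{k-1}\sum_{j=1}^n\left(\left[M^{k-1-r}\right]_{ij}-\left[M^\infty\right]_{ij}\right)\mb{g}_j^r.
\]

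Finally I would bound each coefficient and re-index. For $r\le k-2$ the exponent $k-1-r\ge1$, so Lemma~\ref{lem_M2}(b) gives $\big|[M^{k-1-r}]_{ij}-[M^\infty]_{ij}\big|\le\Gamma\gamma^{k-1-r}$; applying the triangle inequality and the shift $r\mapsto r+1$ turns these into $\Gamma\sum_{r=1}^{k-1}\gamma^{k-r}\sum_{j=1}^n\|\mb{g}_j^{r-1}\|$. The single remaining term $r=k-1$ has $M^{k-1-r}=M^0=I$, at which the geometric estimate is vacuous; here I would instead use the exact value $\big|[I]_{ij}-\tfrac1n\big|=|\delta_{ij}-\tfrac1n|\le1$, which contributes $\sum_{j=1}^n\|\mb{g}_j^{k-1}\|$ and produces precisely the coefficient-one summand in~(a). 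Part~(b) follows identically with $\mb{y}_i^k=\mb{z}_{i+n}^k$ and $[M^\infty]_{(i+n)j}=0$ for the bottom block; the only difference is that the $r=k-1$ boundary term now carries $[I]_{(i+n)j}=\delta_{(i+n)j}=0$ for $j\le n$, so it vanishes and no coefficient-one term survives.

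I expect the only genuine subtlety to be the bookkeeping at $r=k-1$: the geometric bound of Lemma~\ref{lem_M2}(b) says nothing at the zeroth power $M^0=I$, so this boundary term must be peeled off and controlled by the trivial estimate on $I-M^\infty$. This is exactly what accounts for the asymmetry between the two stated inequalities (the extra coefficient-one term in~(a) versus its absence in~(b)). Everything else is the routine unrolling of a linear recursion, the column-stochasticity computation, and a re-indexing of the resulting convolution sum.
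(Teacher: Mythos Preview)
Your proposal is correct and follows essentially the same approach as the paper's own proof: unroll the recursion~\eqref{alg2} from the zero initial condition, use column-stochasticity of $M$ to identify $\overline{\mb{z}}^k$ with the running average of the perturbations, subtract and apply the geometric entrywise bound of Lemma~\ref{lem_M2}(b), and treat the $M^0=I$ boundary term separately. Your handling of that boundary term (using $|\delta_{ij}-\tfrac1n|\le1$ for part~(a) and $[I]_{(i+n)j}=0$ for $j\le n$ in part~(b)) is slightly more explicit than the paper's, but the argument is the same.
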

\begin{proof}
	For any~$k\geq 0$, we write Eq.~\eqref{alg2} recursively
	\begin{align}\label{eq1_lem_consensus}
	\mb{z}_i^{k}=\sum_{r=1}^{k-1}\sum_{j=1}^{n}[M^{k-r}]_{ij}\mb{g}_j^{r-1}+\mb{g}_i^{k-1}.
	\end{align}	
	We have $\sum_{i=1}^{2n}[M^{k}]_{ij}=1$ for any~$k\geq0$ since each column of~$M$ sums up to one. Considering the recursive relation of~$\mb{z}_i^{k}$ in Eq.~\eqref{eq1_lem_consensus}, we obtain that~$\overline{\mb{z}}^k$ can be written as
	\begin{align}\label{eq2_lem_consensus}
	\overline{\mb{z}}^k&=\sum_{r=1}^{k-1}\sum_{j=1}^{n}\frac{1}{n}\mb{g}_j^{r-1}+\frac{1}{n}\sum_{i=1}^{n}\mb{g}_i^{k-1}.
	\end{align}	
	Subtracting Eq.~\eqref{eq2_lem_consensus} from~\eqref{eq1_lem_consensus} and taking the norm, we obtain 
	\begin{align}\label{eq3_lem_consensus}
	\left\|\mb{z}_i^{k}-\overline{\mb{z}}^k\right\|\leq&\sum_{r=1}^{k-1}\sum_{j=1}^{n}\left\|[M^{k-r}]_{ij}-\frac{1}{n}\right\|\left\|\mb{g}_j^{r-1}\right\|\nonumber\\
	&+\frac{n-1}{n}\left\|\mb{g}_i^{k-1}\right\|+\frac{1}{n}\sum_{j\neq i}\left\|\mb{g}_j^{k-1}\right\|.
	\end{align}
	The proof of part (a) follows by applying Lemma~\ref{lem_M2} to Eq.~\eqref{eq3_lem_consensus} for~$1\leq i\leq n$, whereas the proof of part (b) follows by applying Lemma~\ref{lem_M2} to Eq.~\eqref{eq1_lem_consensus} for~$n+1\leq i\leq 2n$.
\end{proof}

\subsection{Convergence of the perturbation}
We now show that the perturbation,~$\mb{g}^k_i$, goes to zero, i.e., at $k$th iteration, the norm of the perturbation,~$\mb{g}^k_i$, at any agent can be bounded by the step-size times some positive bounded constant, i.e., there exists some bounded constant $C>0$ such that~$\|\mb{g}_i^k\|\leq C\alpha_k,\forall i,k$. The next lemma bounds perturbations by step-sizes in an ergodic sense.
\begin{lem}\label{lem_error}
	Let Assumptions \ref{asp1}, \ref{asp2} hold. Let $\epsilon$ be the small constant used in the algorithm, Eq.~\eqref{alg1}, such that $\epsilon\leq\frac{1-\gamma}{2n\Gamma\gamma}$. Define the variable $g_k=\sum_{i=1}^n\|\mb{g}_i^k\|$. Then there exists some bounded constant $D>0$ such that for all~$K\geq 2$, ~$g_k$ satisfies:
		\begin{align}\label{gk1}
		\sum_{k=0}^Kg_k\leq D\sum_{k=0}^K\alpha_k;\qquad
		\sum_{k=0}^K\alpha_kg_k\leq D\sum_{k=0}^K\alpha_k^2,
		\end{align}
where~$\alpha_k$ is the diminishing step-size used in the algorithm.	
\end{lem}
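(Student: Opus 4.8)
The plan is to first establish a pointwise bound showing that each perturbation $\mb{g}_i^k$ is controlled by the step-size $\alpha_k$ together with the magnitude of the auxiliary variable $\mb{y}_i^k$, and then to close a self-referential (discrete Gronwall-type) inequality using the consensus estimate of Lemma~\ref{lem_consensus}(b). For the pointwise bound, write $\mb{v}_i^k:=\sum_{j=1}^na_{ij}\mb{x}_j^k$ and $\mb{x}:=\mb{v}_i^k+\epsilon\mb{y}_i^k-\alpha_k\nabla\mb{f}_i^k$, so that $\mb{x}_i^{k+1}=\mc{P}_{\mc{X}}[\mb{x}]$ and $\mb{g}_i^k=\mc{P}_{\mc{X}}[\mb{x}]-\mb{v}_i^k-\epsilon\mb{y}_i^k=(\mc{P}_{\mc{X}}[\mb{x}]-\mb{x})-\alpha_k\nabla\mb{f}_i^k$. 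Since $\mc{X}$ is convex and each $\mb{x}_j^k\in\mc{X}$ for $k\geq1$ (being a projection onto $\mc{X}$), the convex combination $\mb{v}_i^k$ lies in $\mc{X}$; hence $\|\mc{P}_{\mc{X}}[\mb{x}]-\mb{x}\|=\mathrm{dist}(\mb{x},\mc{X})\leq\|\mb{x}-\mb{v}_i^k\|=\|\epsilon\mb{y}_i^k-\alpha_k\nabla\mb{f}_i^k\|$. Combining this with the subgradient bound~\eqref{grad} and the triangle inequality yields the crucial estimate
\begin{align}
\|\mb{g}_i^k\|\leq\epsilon\|\mb{y}_i^k\|+2B\alpha_k.\nonumber
\end{align}

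Summing over $i$ and substituting Lemma~\ref{lem_consensus}(b), which gives $\sum_{i=1}^n\|\mb{y}_i^k\|\leq n\Gamma\sum_{r=1}^{k-1}\gamma^{k-r}g_{r-1}$, produces the recursion
\begin{align}
g_k\leq\epsilon n\Gamma\sum_{r=1}^{k-1}\gamma^{k-r}g_{r-1}+2nB\alpha_k.\nonumber
\end{align}
To obtain the first claim I would sum this over $k=0,\ldots,K$, interchange the order of the double summation (writing $s=r-1$), and bound the resulting geometric tail by $\sum_{m\geq1}\gamma^m=\gamma/(1-\gamma)$, which gives $\sum_{k=0}^Kg_k\leq\epsilon n\Gamma\tfrac{\gamma}{1-\gamma}\sum_{k=0}^Kg_k+2nB\sum_{k=0}^K\alpha_k$. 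The hypothesis $\epsilon\leq\frac{1-\gamma}{2n\Gamma\gamma}$ forces the contraction factor $\epsilon n\Gamma\tfrac{\gamma}{1-\gamma}\leq\tfrac12$, so the $g_k$-term on the right can be absorbed into the left, yielding $\sum_{k=0}^Kg_k\leq 4nB\sum_{k=0}^K\alpha_k$; thus $D=4nB$ serves. The second claim follows the same template after multiplying the recursion by $\alpha_k$ before summing: there I would use that the step-sizes are non-increasing, so $\alpha_k\leq\alpha_{r-1}$ for $k>r-1$, to replace $\alpha_k g_{r-1}$ by $\alpha_{r-1}g_{r-1}$, again pull out the geometric series, and absorb the resulting $\tfrac12\sum_{k=0}^K\alpha_kg_k$ term to get $\sum_{k=0}^K\alpha_kg_k\leq 4nB\sum_{k=0}^K\alpha_k^2$.

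The delicate point, and the main obstacle, is arranging the per-step bound so that the induced recursion is a genuine contraction. Both $\|\mb{g}_i^k\|\leq\epsilon\|\mb{y}_i^k\|+2B\alpha_k$ and the cruder $\|\mb{g}_i^k\|\leq2\epsilon\|\mb{y}_i^k\|+B\alpha_k$ are valid, but only the former---with the factor of two attached to the step-size term rather than to $\epsilon\|\mb{y}_i^k\|$---makes the constant $\frac{1-\gamma}{2n\Gamma\gamma}$ yield a contraction factor of $\tfrac12<1$ instead of the useless value $1$. One must also verify the order-swap in the double sum and check that it is legitimate to reintroduce the missing index $k=0$ together with the finite-tail geometric bound without weakening the estimate; these are routine once the contraction coefficient is secured. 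Finally, the $k=0$ boundary term $g_0$ is a bounded constant that is absorbed into $D$ through the strictly positive $\alpha_0$ present in $\sum_{k=0}^K\alpha_k$ and $\sum_{k=0}^K\alpha_k^2$.
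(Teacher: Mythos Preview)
Your argument is correct and follows the same overall architecture as the paper: bound $\|\mb{g}_i^k\|$ pointwise by a combination of $\alpha_k$ and $\|\mb{y}_i^k\|$, feed in Lemma~\ref{lem_consensus}(b) to obtain a convolution-type recursion for $g_k$, sum in $k$, swap the double sum, bound the geometric tail by $\gamma/(1-\gamma)$, and absorb. The only substantive difference is in how you derive the pointwise bound. The paper splits $\mb{g}_i^k=\big(\mb{x}_i^{k+1}-\mb{v}_i^k\big)-\epsilon\mb{y}_i^k$ and applies nonexpansiveness (Lemma~\ref{lem_NonexpanBregman}(b) with $\mb{y}=\mb{v}_i^k$) to the first piece, obtaining $\|\mb{g}_i^k\|\leq B\alpha_k+2\epsilon\|\mb{y}_i^k\|$; you instead split $\mb{g}_i^k=\big(\mc{P}_{\mc{X}}[\mb{x}]-\mb{x}\big)-\alpha_k\nabla\mb{f}_i^k$ and bound the projection residual by the distance to $\mb{v}_i^k\in\mc{X}$, obtaining $\|\mb{g}_i^k\|\leq 2B\alpha_k+\epsilon\|\mb{y}_i^k\|$. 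Your split doubles the harmless $B\alpha_k$ term rather than the $\epsilon\|\mb{y}_i^k\|$ term, so after summation the contraction coefficient is $n\epsilon\Gamma\gamma/(1-\gamma)\leq\tfrac12$ under the stated hypothesis, whereas the paper's coefficient is $2n\epsilon\Gamma\gamma/(1-\gamma)\leq1$, which is vacuous at equality (the paper tacitly relies on strict inequality, noting only that ``$\epsilon$ can be arbitrary small''). Your observation that the placement of the factor~$2$ is what makes the stated threshold on $\epsilon$ actually sufficient is well taken, and your handling of the $k=0$ boundary is also more careful than the paper's.
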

\begin{proof}
	Based on the result of Lemma~\ref{lem_NonexpanBregman}(b), we have 
	\begin{align}
	&\left\|\mc{P}_{\mc{X}}\left[\sum_{j=1}^na_{ij}\mb{x}_j^k+\epsilon\mb{y}_i^k-\alpha_k\nabla \mb{f}_i^k\right]-\sum_{j=1}^na_{ij}\mb{x}_j^k\right\|\nonumber\\
	&\leq\left\|\epsilon\mb{y}_i^k-\alpha_k\nabla \mb{f}_i^k\right\|.
	\end{align}
	Therefore, we obtain
	\begin{align}
	\left\|\mb{g}_i^k\right\|&\leq\left\|\mb{x}_i^{k+1}-\sum\limits_{j=1}^na_{ij}\mb{x}_j^k\right\|+\epsilon\left\|\mb{y}_i^k\right\|\nonumber,\\
	&\leq\left\|\epsilon\mb{y}_i^k-\alpha_k\nabla \mb{f}_i^k\right\|+\epsilon\left\|\mb{y}_i^k\right\|\nonumber,\\
	&\leq B\alpha_k+2\epsilon\left\|\mb{y}_i^k\right\|,
	\end{align}
	where in the last inequality, we use the relation $\|\nabla \mb{f}_i^k\|\leq B$. Applying the result of Lemma~\ref{lem_consensus}(b) regarding $\|\mb{y}_i^k\|$ to the preceding relation, we have for all $i$,
	\begin{align}
	\left\|\mb{g}_i^k\right\|\leq B\alpha_k+2\epsilon\Gamma\sum_{r=1}^{k-1}\gamma^{k-r}\sum_{j=1}^n\left\|\mb{g}_j^{r-1}\right\|.\nonumber
	\end{align}
	By defining $g_k=\sum_{i=1}^n\|\mb{g}_i^k\|$, and summing the above relation over $i$, it follows that
	\begin{align}\label{gk0}
	g_k\leq nB\alpha_k+2n\epsilon\Gamma\sum_{r=1}^{k-1}\gamma^{k-r}g_{r-1}.
	\end{align}
	Summing Eq.~\eqref{gk0} over time from $k=0$ to $K$, we obtain
	\begin{align}
	\sum_{k=0}^Kg_k&\leq nB\sum_{k=0}^K\alpha_k+2n\epsilon\Gamma\sum_{k=0}^K\sum_{r=1}^{k-1}\gamma^{k-r}g_{r-1}\nonumber,\\
	&\leq nB\sum_{k=0}^K\alpha_k+2n\epsilon\Gamma\frac{\gamma(1-\gamma^{K-2})}{1-\gamma}\sum_{k=0	}^{K-2}g_k.\nonumber
	\end{align}
	Therefore, it satisfies, for any $K\geq2$, that
	\begin{align}
	\left(1-\frac{2n\epsilon\Gamma\gamma}{1-\gamma}\right)\sum_{k=0}^Kg_k&\leq nB\sum_{k=0}^K\alpha_k.\nonumber
	\end{align}
	Since $\epsilon$ can be arbitrary small, (see Lemma~\ref{lem_M2}), it is achievable that $\epsilon\leq\frac{1-\gamma}{2n\Gamma\gamma}$, which obtains the desired result.
	
	Similarly, it can be derived from Eq.~\eqref{gk0} that
	\begin{align}
	\sum_{k=0}^K\alpha_kg_k&\leq nB\sum_{k=0}^K\alpha_k^2+2n\epsilon\Gamma\sum_{k=0}^K\alpha_k\sum_{r=1}^{k-1}\gamma^{k-r}g_{r-1}.\nonumber
	\end{align}
	Noticing that the step-size is diminishing, it follows that
	\begin{align}
	\sum_{k=0}^K\alpha_k\sum_{r=1}^{k-1}\gamma^{k-r}g_{r-1}&\leq\sum_{k=0}^K\sum_{r=1}^{k-1}\gamma^{k-r}\alpha_{r-1}g_{r-1},\nonumber\\
	&\leq\frac{\gamma(1-\gamma^{K-2})}{1-\gamma}\sum_{k=0}^{K-2}\alpha_kg_k.\nonumber
	\end{align}
		Therefore, it satisfies, for any $K\geq2$, that
		\begin{align}
		\left(1-\frac{2n\epsilon\Gamma\gamma}{1-\gamma}\right)\sum_{k=0}^K\alpha_kg_k&\leq nB\sum_{k=0}^K\alpha_k^2,\nonumber
		\end{align}
		which completes the proofs.
\end{proof}
Based on the result of Lemma \ref{lem_error}, we show that at $k$th iteration, the norm of perturbation,~$\mb{g}_i^k$, of any agent can be bounded by the step-size times some bounded constant.
\begin{lem}\label{lem_error1}
	Let Assumptions \ref{asp1}, \ref{asp2} hold. Let $\epsilon$ be the small constant used in the algorithm, Eq.~\eqref{alg1}, such that $\epsilon\leq\frac{1-\gamma}{2n\Gamma\gamma}$. Define the variable $g_k=\sum_{i=1}^n\|\mb{g}_i^k\|$. Then there exists some bounded contant $C>0$ such that for all~$k\geq 0$, ~$g_k$ satisfies:
	\begin{align}
	g_k\leq C\alpha_k;\label{gk}
	\end{align}
	where~$\alpha_k$ is the diminishing step-size used in the algorithm.	
\end{lem}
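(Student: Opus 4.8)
The plan is to upgrade the recursive inequality~\eqref{gk0}, namely $g_k\le nB\alpha_k+2n\epsilon\Gamma\sum_{r=1}^{k-1}\gamma^{k-r}g_{r-1}$, from the ergodic (summed) estimates of Lemma~\ref{lem_error} to a pointwise bound. I would argue by strong induction on $k$, positing the claim $g_k\le C\alpha_k$ with a constant $C$ to be fixed at the end. The base case is immediate: for $k=0$ the convolution sum in~\eqref{gk0} is empty (it runs from $r=1$ to $-1$), so $g_0\le nB\alpha_0$, consistent with any $C\ge nB$. For the inductive step, assuming $g_j\le C\alpha_j$ for every $j<k$ and substituting into~\eqref{gk0} gives
\begin{align}
g_k\le nB\alpha_k+2n\epsilon\Gamma\,C\sum_{r=1}^{k-1}\gamma^{k-r}\alpha_{r-1}.\nonumber
\end{align}

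Everything then hinges on controlling the geometric convolution $\sum_{r=1}^{k-1}\gamma^{k-r}\alpha_{r-1}$ by a fixed multiple of the current step-size $\alpha_k$. I would establish a bound of the form $\sum_{r=1}^{k-1}\gamma^{k-r}\alpha_{r-1}\le\theta\,\alpha_k$ by splitting the sum into a recent window $r\in[k-L,k-1]$, on which $\gamma^{k-r}$ is summable and the non-increasing step-size obeys $\alpha_{r-1}\le\alpha_{k-L-1}\le c\,\alpha_k$ because $\alpha_k$ is slowly varying over a window of fixed length $L$, and an old tail $r<k-L$, on which $\gamma^{k-r}\le\gamma^{L}$ decays fast enough to dominate the growth of the ratio $\alpha_{r-1}/\alpha_k$. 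With such a $\theta$ in hand, the induction closes provided $nB+2n\epsilon\Gamma\,C\,\theta\le C$, i.e.\ provided one may take $C\ge nB/(1-2n\epsilon\Gamma\theta)$, which is finite as long as $2n\epsilon\Gamma\theta<1$.

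The main obstacle is precisely this last requirement. Since $\alpha_{r-1}\ge\alpha_k$ for the relevant indices, one always has $\theta\ge\gamma/(1-\gamma)$, so the threshold $\epsilon\le\frac{1-\gamma}{2n\Gamma\gamma}$ is not by itself enough to force $2n\epsilon\Gamma\theta<1$. I would therefore invoke the strict version of that inequality (already needed in Lemma~\ref{lem_error} in order to divide by $1-\frac{2n\epsilon\Gamma\gamma}{1-\gamma}$) together with a quantitative slowly-varying property of the step-size, so that $\theta$ can be taken close enough to $\gamma/(1-\gamma)$ to leave a margin below $1$. I expect the delicate bookkeeping to be the interplay among the window length $L$, the strictness margin in $\epsilon$, and the regularity of $\alpha_k$; the standard diminishing choices compatible with the persistence conditions satisfy this regularity, and it is the geometric factor $\gamma^{k-r}$ doing the heavy lifting that makes a pointwise $O(\alpha_k)$ estimate survive at all.
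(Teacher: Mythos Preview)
Your route differs from the paper's. The paper does not attempt to close the recursion~\eqref{gk0} pointwise by induction; it argues by contradiction directly from the ergodic bounds of Lemma~\ref{lem_error}. Assuming $g_k/\alpha_k$ is unbounded, the paper asserts that eventually $g_k>2D\alpha_k$ for all large $k$, whence $\sum_{k\ge K}g_k>2D\sum_{k\ge K}\alpha_k$; since the tail $\sum_{k\ge K}\alpha_k$ diverges while the head $\sum_{k<K}\alpha_k$ is finite, this contradicts the first inequality in~\eqref{gk1}. No pointwise control of the geometric convolution and no regularity of $\alpha_k$ beyond the persistence conditions enter that argument.

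Your induction, by contrast, has a genuine gap precisely where you flag it. The bound $\sum_{r=1}^{k-1}\gamma^{k-r}\alpha_{r-1}\le\theta\,\alpha_k$ with a $k$-independent $\theta$ is \emph{not} a consequence of monotonicity together with the persistence conditions. For a concrete obstruction, take $\alpha_k=2^{-j^2}$ on the $j$th block of length $2^{j^2}$ (blocks indexed $j=0,1,2,\dots$): this sequence is nonincreasing with $\sum_k\alpha_k=\sum_j 1=\infty$ and $\sum_k\alpha_k^2=\sum_j 2^{-j^2}<\infty$, yet at each block boundary the convolution is of order $2^{-j^2}\gamma/(1-\gamma)$ while the current step is $2^{-(j+1)^2}$, so the ratio blows up like $2^{2j+1}$. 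Your ``slowly-varying'' condition is therefore an additional hypothesis on the step-size, not mere bookkeeping; the paper's contradiction route is set up exactly to avoid needing it, by leveraging the already-established summed estimates rather than trying to beat the convolution term by term.
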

\begin{proof}
	Suppose on the contrary that $g_k/\alpha_k=\infty$, for some $k$. Since $\alpha_k\neq0$, for any finite $k$, and we get from Lemma~\ref{lem_error} that $\sum_{k=0}^\infty\alpha_kg_k\leq\sum_{k=0}^\infty\alpha_k^2<\infty$, we obtain that $g_k$ is bounded for any finite $k$. Therefore, we only get~$g_k/\alpha_k=\infty$ when~$k$ goes to infinity, i.e., $\lim_{k\rightarrow\infty}\frac{g_k}{\alpha_k}=\infty$. This implies that there exists some finite $K$ such that for all $k\geq K$, we have~$g_k>2D\alpha_k$, where $D$ is the constant in the result of Lemma~\ref{lem_error}. The preceding relation implies that
	\begin{align}
	\sum_{k=K}^\infty g_k>2D\sum_{k=K}^\infty\alpha_k.\nonumber 
	\end{align}
	Since $\sum_{k=0}^\infty\alpha_k=\infty$, we have $\sum_{k=0}^{K-1}\alpha_k<\sum_{k=K}^\infty \alpha_k=\infty$. Therefore, we obtain
	\begin{align}
	\sum_{k=0}^\infty g_k>\sum_{k=K}^\infty g_k>2D\sum_{k=K}^\infty\alpha_k>D\sum_{k=0}^\infty\alpha_k,\nonumber
	\end{align}
which is a contradiction to the result in Lemma \ref{lem_error}(a). 
\end{proof}
Lemma \ref{lem_error1} shows that the perturbation,~$\mb{g}_i^k$, goes to zero and the D-DPS converges. We next show that the agents reach consensus and also converge to the optimal solution.

\subsection{Consensus in Estimates}
In Lemma \ref{lem_consensus}, we bound the disagreement between estimates of agent and the accumulation state, $\|\mb{x}_i^k-\overline{\mb{z}}^k\|$, in terms of the perturbation norm, $\sum_{j=1}^n\|\mb{g}_j^k\|$. In Lemmas \ref{lem_error} and \ref{lem_error1}, we bound the perturbation. By combining these results, we show the consensus property of the algorithm in the following lemma.
\begin{lem}\label{lem_consensus2}
	Let Assumptions \ref{asp1}, \ref{asp2} hold. Let~$\left\{\mb{z}_i^k\right\}$ be the sequence over~$k$ generated by Eq.~\eqref{alg2}. Then, for all $i\in\mc{V}$:
	\begin{enumerate}[label=(\alph*)]
		\item  the agents reach consensus, i.e.,~$\lim_{k\rightarrow\infty}\left\|\mb{x}_i^k-\overline{\mb{z}}^k\right\|=0;$
		
		\item at each agent, $\lim_{k\rightarrow\infty}\left\|\mb{y}_i^k\right\|=0.$
	\end{enumerate}
\end{lem}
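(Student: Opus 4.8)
The plan is to combine the two consensus bounds from Lemma~\ref{lem_consensus} with the pointwise perturbation bound from Lemma~\ref{lem_error1}, thereby reducing both claims to the vanishing of a single scalar quantity: a geometric convolution of the diminishing step-sizes. Writing $g_k=\sum_{i=1}^n\|\mb{g}_i^k\|$ as before, Lemma~\ref{lem_consensus}(a) gives $\|\mb{x}_i^k-\overline{\mb{z}}^k\|\leq\Gamma\sum_{r=1}^{k-1}\gamma^{k-r}g_{r-1}+g_{k-1}$ and Lemma~\ref{lem_consensus}(b) gives $\|\mb{y}_i^k\|\leq\Gamma\sum_{r=1}^{k-1}\gamma^{k-r}g_{r-1}$. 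Substituting the bound $g_k\leq C\alpha_k$ from Lemma~\ref{lem_error1} into both of these, it suffices to show that the resulting right-hand sides tend to zero as $k\to\infty$.

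First I would observe that the persistence condition $\sum_{k=0}^\infty\alpha_k^2<\infty$ forces $\alpha_k\to0$; hence the standalone term $C\alpha_{k-1}$ appearing in the bound for $\|\mb{x}_i^k-\overline{\mb{z}}^k\|$ already vanishes. The only remaining work is to prove that the convolution $S_k:=\sum_{r=1}^{k-1}\gamma^{k-r}\alpha_{r-1}$ satisfies $\lim_{k\to\infty}S_k=0$, since after the substitution both consensus bounds are controlled by $\Gamma C\,S_k$ (plus the already-vanishing term in part (a)). Establishing this limit is the one step that is not a direct substitution, and is where I expect essentially all of the effort to lie; the weights $\gamma^{k-r}$ concentrate on indices $r$ near $k$, precisely where $\alpha_{r-1}$ is small, which is the intuition driving the argument.

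To make this rigorous I would use a head/tail split. Fixing an arbitrary $\delta>0$, choose $N$ so large that $\alpha_m<\delta$ for every $m\geq N$, which is possible since $\alpha_k\to0$. For the tail $r\geq N+1$, bound $\alpha_{r-1}<\delta$ and sum the geometric weights to obtain $\sum_{r=N+1}^{k-1}\gamma^{k-r}\alpha_{r-1}<\delta\sum_{j=0}^\infty\gamma^j=\frac{\delta}{1-\gamma}$, using $\gamma\in(0,1)$. For the head $r\leq N$, the weights satisfy $\gamma^{k-r}\leq\gamma^{k-N}$, so that portion is bounded by $\gamma^{k-N}\sum_{r=1}^N\alpha_{r-1}$, a fixed finite sum multiplied by a factor tending to zero as $k\to\infty$. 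Taking $k\to\infty$ yields $\limsup_k S_k\leq\frac{\delta}{1-\gamma}$, and since $\delta$ was arbitrary, $S_k\to0$. Feeding this back through the two displayed bounds gives $\lim_{k\to\infty}\|\mb{x}_i^k-\overline{\mb{z}}^k\|=0$ and $\lim_{k\to\infty}\|\mb{y}_i^k\|=0$, establishing parts (a) and (b) respectively.
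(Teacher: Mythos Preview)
Your argument is correct, but it follows a genuinely different route from the paper's. The paper does not prove the pointwise limit directly; instead it multiplies the bound from Lemma~\ref{lem_consensus}(a) by $\alpha_k$, sums over $k$, and after exchanging the order of summation and invoking $g_k\leq C\alpha_k$ arrives at
\[
\sum_{k=1}^{\infty}\alpha_k\left\|\mb{x}_i^k-\overline{\mb{z}}^k\right\|\;\leq\;\left(\tfrac{\Gamma C\gamma}{1-\gamma}+1\right)\sum_{k=1}^{\infty}\alpha_k^2\;<\;\infty,
\]
and then argues that, together with $\sum_k\alpha_k=\infty$, this forces the consensus error to vanish. Your approach instead substitutes $g_k\leq C\alpha_k$ directly into the consensus bound and proves that the geometric convolution $S_k=\sum_{r=1}^{k-1}\gamma^{k-r}\alpha_{r-1}$ tends to zero via a head/tail split. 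This is cleaner for two reasons: it uses only $\alpha_k\to0$ rather than square-summability, and it sidesteps the delicate point that ``$\sum_k\alpha_k a_k<\infty$ with $\sum_k\alpha_k=\infty$'' in general yields only $\liminf_k a_k=0$, not the full limit. On the other hand, the paper's detour is not gratuitous: the summability estimate it establishes (Eq.~\eqref{lm_err1eq1}) is exactly what is invoked later in the proof of Theorem~1 to control the terms $\sum_k\alpha_k\|\overline{\mb{z}}^k-\mb{x}_i^k\|$ in Eq.~\eqref{mr_eq5}. If you adopt your direct argument for Lemma~\ref{lem_consensus2}, you would still need to supply that summability bound separately when you reach the optimality proof.
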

\begin{proof}
	Considering Lemma \ref{lem_consensus}(a), we have for any $K>0$
	\begin{align}
	\sum_{k=1}^K\alpha_k\left\|\mb{x}_i^k-\overline{\mb{z}}^k\right\|\leq&\Gamma\sum_{k=1}^K\sum_{r=1}^{k-1}\gamma^{k-r}\alpha_k\sum_{j=1}^n\left\|\mb{g}_j^{r-1}\right\|\nonumber\\	
	&+\sum_{k=1}^K\alpha_k\sum_{j=1}^n\left\|\mb{g}_j^{k-1}\right\|\nonumber,\\
	\leq&\Gamma C\sum_{k=1}^K\sum_{r=1}^{k-1}\gamma^{k-r}\alpha_k\alpha_{r-1}+\sum_{k=1}^K\alpha_k\alpha_{k-1}\nonumber,\\
	\leq&\frac{\Gamma C\gamma(1-\gamma^{K})}{1-\gamma}\sum_{k=1}^K\alpha_k^2+\sum_{k=1}^K\alpha_k^2,\label{lem_consensus2_eq}
	\end{align}
	where we used Lemma \ref{lem_error1} to obtain the second inequality. By letting $K\rightarrow\infty$ and noticing that $\sum_{k=0}^\infty\alpha_k^2<\infty$, we get
	\begin{align}
	\sum_{k=1}^\infty\alpha_k\left\|\mb{x}_i^k-\overline{\mb{z}}^k\right\|<\infty.\label{lm_err1eq1}
	\end{align}
	Combined with $\sum_{k=0}^\infty\alpha_k=\infty$, the preceding relation implies part (a). The result in part (b) follows a similar argument. 
\end{proof}
\subsection{Optimality Convergence}
The result of Lemma \ref{lem_consensus2} reveals the fact that all agents reach consensus. We next show that the accumulation state converges to the optimal solution of the problem.
\begin{theorem}
	Let Assumptions \ref{asp1}, \ref{asp2} hold. Let~$\left\{\mb{z}_i^k\right\}$ be the sequence over~$k$ generated by Eq.~\eqref{alg2}. Then, each agent converges to the optimal solution, i.e.,
	\begin{align}
	\lim_{k\rightarrow\infty}f(\mb{x}_i^k)=f^*,\qquad\forall i\in\mc{V}. \nonumber
	\end{align}
\end{theorem}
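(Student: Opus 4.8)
The plan is to follow the classical projected-subgradient template applied to the accumulation state $\overline{\mb{z}}^k$, and only at the very end transfer the conclusion to each $\mb{x}_i^k$ using the consensus results already in hand. Fix any optimizer $\mb{x}^*\in\mc{X}$ with $f(\mb{x}^*)=f^*$. Since each column of $M$ sums to one and $\mb{g}_i^k=\mb{0}_p$ for $n+1\le i\le 2n$, the accumulation state obeys the exact recursion $\overline{\mb{z}}^{k+1}=\overline{\mb{z}}^k+\frac{1}{n}\sum_{i=1}^n\mb{g}_i^k$. First I would expand $\|\overline{\mb{z}}^{k+1}-\mb{x}^*\|^2$ and use $\big\|\frac1n\sum_{i}\mb{g}_i^k\big\|\le\frac1n g_k\le\frac{C}{n}\alpha_k$ from Lemma~\ref{lem_error1} to dispatch the quadratic remainder as an $O(\alpha_k^2)$ term.

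The main step is the descent inequality for the cross term $\frac{2}{n}\langle\overline{\mb{z}}^k-\mb{x}^*,\sum_i\mb{g}_i^k\rangle$. I split $\mb{g}_i^k=\mb{e}_i^k-\alpha_k\nabla\mb{f}_i^k$, where $\mb{e}_i^k:=\mc{P}_{\mc{X}}[\mb{w}_i^k]-\mb{w}_i^k$ and $\mb{w}_i^k:=\sum_j a_{ij}\mb{x}_j^k+\epsilon\mb{y}_i^k-\alpha_k\nabla\mb{f}_i^k$, so that $\mb{x}_i^{k+1}=\mc{P}_{\mc{X}}[\mb{w}_i^k]$. For the subgradient part I use convexity, $\langle\nabla\mb{f}_i^k,\mb{x}_i^k-\mb{x}^*\rangle\ge f_i(\mb{x}_i^k)-f_i(\mb{x}^*)$, together with $\|\nabla\mb{f}_i^k\|\le B$ (Assumption~\ref{asp2}) to replace $\mb{x}_i^k$ by $\overline{\mb{z}}^k$ at a cost of $2B\sum_i\|\mb{x}_i^k-\overline{\mb{z}}^k\|$; this produces the driving term $-\frac{2\alpha_k}{n}\big(f(\overline{\mb{z}}^k)-f^*\big)$. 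For the residual part $\langle\overline{\mb{z}}^k-\mb{x}^*,\mb{e}_i^k\rangle$ I write $\overline{\mb{z}}^k-\mb{x}^*=(\mb{x}_i^{k+1}-\mb{x}^*)+(\overline{\mb{z}}^k-\mb{x}_i^{k+1})$: the first inner product is nonpositive by the variational inequality of Lemma~\ref{lem_NonexpanBregman}(a), while the leftover is bounded by $\|\mb{e}_i^k\|\,\|\overline{\mb{z}}^k-\mb{x}_i^{k+1}\|$ with $\|\mb{e}_i^k\|\le\|\mb{g}_i^k\|+\alpha_k B$ and $\|\overline{\mb{z}}^k-\mb{x}_i^{k+1}\|\le\frac{C}{n}\alpha_k+\|\mb{x}_i^{k+1}-\overline{\mb{z}}^{k+1}\|$. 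Collecting terms gives
\begin{align}
\|\overline{\mb{z}}^{k+1}-\mb{x}^*\|^2\le\|\overline{\mb{z}}^k-\mb{x}^*\|^2-\tfrac{2\alpha_k}{n}\big(f(\overline{\mb{z}}^k)-f^*\big)+h_k,\nonumber
\end{align}
where $h_k\ge0$ gathers $O(\alpha_k^2)$ terms together with $\tfrac{4B}{n}\alpha_k\sum_i\|\mb{x}_i^k-\overline{\mb{z}}^k\|$ and $\alpha_k\sum_i\|\mb{x}_i^{k+1}-\overline{\mb{z}}^{k+1}\|$.

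Every piece of $h_k$ is summable: $\sum_k\alpha_k^2<\infty$ by the persistence condition, and $\sum_k\alpha_k\sum_i\|\mb{x}_i^k-\overline{\mb{z}}^k\|<\infty$ was in fact proved inside Lemma~\ref{lem_consensus2} (cf.\ Eq.~\eqref{lm_err1eq1}), the index-shifted sum being controlled by the same geometric-sum and diminishing step-size estimates used in Lemmas~\ref{lem_error} and~\ref{lem_consensus2}. Because $\overline{\mb{z}}^k$ is only asymptotically feasible, $\mathrm{dist}(\overline{\mb{z}}^k,\mc{X})\le\frac1n\sum_i\|\mb{y}_i^k\|$, the quantity $f(\overline{\mb{z}}^k)-f^*$ may be slightly negative but is bounded below by $-B\sum_i\|\mb{y}_i^k\|$; since $\sum_k\alpha_k\sum_i\|\mb{y}_i^k\|<\infty$ (same argument as Lemma~\ref{lem_consensus2}(b)), I absorb this negative part into the summable remainder. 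A deterministic Robbins--Siegmund/super\-martingale argument then yields both that $\{\|\overline{\mb{z}}^k-\mb{x}^*\|\}$ converges and that $\sum_k\alpha_k\big(f(\overline{\mb{z}}^k)-f^*\big)^+<\infty$; with $\sum_k\alpha_k=\infty$ this forces $\liminf_k f(\overline{\mb{z}}^k)\le f^*$. Taking a subsequence along which $f(\overline{\mb{z}}^k)\to f^*$ and using boundedness of $\overline{\mb{z}}^k$, closedness of $\mc{X}$, and $\mathrm{dist}(\overline{\mb{z}}^k,\mc{X})\to0$, the subsequential limit $\widetilde{\mb{x}}$ is an optimizer; setting $\mb{x}^*=\widetilde{\mb{x}}$ in the convergence of $\|\overline{\mb{z}}^k-\mb{x}^*\|$ upgrades this to $\overline{\mb{z}}^k\to\widetilde{\mb{x}}$, so $f(\overline{\mb{z}}^k)\to f^*$ by continuity. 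Finally $|f(\mb{x}_i^k)-f(\overline{\mb{z}}^k)|\le nB\|\mb{x}_i^k-\overline{\mb{z}}^k\|\to0$ by Lemma~\ref{lem_consensus2}(a), giving $\lim_k f(\mb{x}_i^k)=f^*$.

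I expect the descent inequality to be the main obstacle, and within it the control of the projection residuals $\mb{e}_i^k$: a naive norm bound of $\langle\overline{\mb{z}}^k-\mb{x}^*,\mb{e}_i^k\rangle$ produces an $\alpha_k\|\overline{\mb{z}}^k-\mb{x}^*\|$ term that is \emph{not} summable, so the variational inequality of Lemma~\ref{lem_NonexpanBregman}(a) must be invoked to annihilate the dominant inner product, leaving only consensus-type quantities that Lemmas~\ref{lem_error1} and~\ref{lem_consensus2} render summable. A secondary subtlety is that the surplus variables $\mb{y}_i^k$ keep $\overline{\mb{z}}^k$ off the feasible set, so the sign of $f(\overline{\mb{z}}^k)-f^*$ is not guaranteed and must be managed through the $\sum_k\alpha_k\sum_i\|\mb{y}_i^k\|<\infty$ bound.
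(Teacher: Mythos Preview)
Your argument is essentially the paper's own proof. You use the same recursion $\overline{\mb{z}}^{k+1}=\overline{\mb{z}}^k+\frac{1}{n}\sum_i\mb{g}_i^k$, the same add--subtract of $\alpha_k\nabla\mb{f}_i^k$ in the cross term, and the same application of Lemma~\ref{lem_NonexpanBregman}(a) to kill the dangerous piece $\langle\mb{x}_i^{k+1}-\mb{x}^*,\mb{g}_i^k+\alpha_k\nabla\mb{f}_i^k\rangle$; your quantity $\mb{e}_i^k$ is precisely the paper's $\mb{g}_i^k+\alpha_k\nabla\mb{f}_i^k$, and your two--way split of $\overline{\mb{z}}^k-\mb{x}^*$ through $\mb{x}_i^{k+1}$ differs from the paper's three--way split through $\overline{\mb{z}}^{k+1}$ and $\mb{x}_i^{k+1}$ only in bookkeeping, yielding the same summable remainder terms. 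The one place you go further is the endgame: the paper passes directly from $\sum_k\alpha_k\big(f(\overline{\mb{z}}^k)-f^*\big)<\infty$ and $\sum_k\alpha_k=\infty$ to $\lim_k f(\overline{\mb{z}}^k)=f^*$, whereas you supply the missing Robbins--Siegmund step, the handling of possible negativity of $f(\overline{\mb{z}}^k)-f^*$ via $\mathrm{dist}(\overline{\mb{z}}^k,\mc{X})\le\frac{1}{n}\sum_i\|\mb{y}_i^k\|$, and the subsequence argument upgrading $\liminf$ to $\lim$. These additions are correct and make the proof more complete, but the overall route is the same.
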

\begin{proof}
	Consider Eq.~\eqref{alg2} and the fact that each column of~$M$ sums to one, we have the accumulation state
	\begin{align}
	\overline{\mb{z}}^{k+1}&=\overline{\mb{z}}^k+\frac{1}{n}\sum_{i=1}^{n}\mb{g}_i^k.\nonumber
	\end{align}
	Therefore, we obtain that
	\begin{align}
	&\left\|\overline{\mb{z}}^{k+1}-\mb{x}^*\right\|^2=\left\|\overline{\mb{z}}^k-\mb{x}^*\right\|^2+\left\|\frac{1}{n}\sum_{i=1}^{n}\mb{g}_i^k\right\|^2\nonumber\\
	&+\frac{2}{n}\sum_{i=1}^{n}\left\langle\overline{\mb{z}}^k-\mb{x}^*,\mb{g}_i^k\right\rangle,\nonumber\\
	&=\left\|\overline{\mb{z}}^k-\mb{x}^*\right\|^2+\frac{1}{n^2}\left\|\sum_{i=1}^{n}\mb{g}_i^k\right\|^2-\frac{2\alpha_k}{n}\sum_{i=1}^{n}\left\langle\overline{\mb{z}}^{k}-\mb{x}^{*},\nabla \mb{f}_i^k\right\rangle\nonumber\\
	&+\frac{2}{n}\sum_{i=1}^{n}\left\langle\overline{\mb{z}}^{k}-\mb{x}^{*},\mb{g}_i^k+\alpha_k\nabla \mb{f}_i^k\right\rangle.\label{mr_eq1}
	\end{align}
	Since $\|\nabla \mb{f}_i^k\|\leq B$, we have 
	\begin{align}
	&\left\langle\overline{\mb{z}}^{k}-\mb{x}^{*},\nabla \mb{f}_i^k\right\rangle=\left\langle\overline{\mb{z}}^{k}-\mb{x}_i^{k},\nabla \mb{f}_i^k\right\rangle+\left\langle\mb{x}_i^{k}-\mb{x}^{*},\nabla \mb{f}_i^k\right\rangle,\nonumber\\
	&\geq \left\langle\overline{\mb{z}}^{k}-\mb{x}_i^{k},\nabla \mb{f}_i^k\right\rangle+f_i(\mb{x}_i^k)-f_i(\mb{x}^*),\nonumber\\
	&\geq-B\left\|\overline{\mb{z}}^{k}-\mb{x}_i^{k}\right\|+f_i(\mb{x}_i^k)-f_i(\overline{\mb{z}}^{k})+f_i(\overline{\mb{z}}^{k})-f_i(\mb{x}^*),\nonumber\\
	&\geq-2B\left\|\overline{\mb{z}}^{k}-\mb{x}_i^{k}\right\|+f_i(\overline{\mb{z}}^{k})-f_i(\mb{x}^*).\label{mr_eq2}
	\end{align}
	By substituting Eq.~\eqref{mr_eq2} in Eq.~\eqref{mr_eq1}, we obtain that
	\begin{align}
	\frac{2\alpha_k}{n}\left(f(\overline{\mb{z}}^{k})-f^*\right)&\leq\left\|\overline{\mb{z}}^k-\mb{x}^*\right\|^2-\left\|\overline{\mb{z}}^{k+1}-\mb{x}^*\right\|^2\nonumber\\
	&+\frac{1}{n^2}\left\|\sum_{i=1}^{n}\mb{g}_i^k\right\|^2+\frac{4B\alpha_k}{n}\sum_{i=1}^n\left\|\overline{\mb{z}}^{k}-\mb{x}_i^{k}\right\|\nonumber\\
	&+\frac{2}{n}\sum_{i=1}^{n}\left\langle\overline{\mb{z}}^{k}-\mb{x}^{*},\mb{g}_i^k+\alpha_k\nabla \mb{f}_i^k\right\rangle.\label{mr_eq3}
	\end{align}
	We now analyze the last term in Eq.~\eqref{mr_eq3}.
	\begin{align}
	&\sum_{i=1}^{n}\left\langle\overline{\mb{z}}^k-\mb{x}^*,\mb{g}_i^k+\alpha_k\nabla\mb{f}_i^k\right\rangle=\sum_{i=1}^{n}\left\langle\overline{\mb{z}}^k-\overline{\mb{z}}^{k+1},\mb{g}_i^k+\alpha_k\nabla \mb{f}_i^k\right\rangle\nonumber\\
	&+\sum_{i=1}^{n}\left\langle\overline{\mb{z}}^{k+1}-\mb{x}_i^{k+1},\mb{g}_i^k+\alpha_k\nabla \mb{f}_i^k\right\rangle\nonumber\\
	&+\sum_{i=1}^{n}\left\langle\mb{x}_i^{k+1}-\mb{x}^*,\mb{g}_i^k+\alpha_k\nabla \mb{f}_i^k\right\rangle\nonumber\\
	&:=s_1+s_2+s_3\label{mr_eq4}
	\end{align}
	where~$s_1$,~$s_2$, and~$s_3$ denote each of RHS terms in Eq.~\eqref{mr_eq4}. We discuss each term in sequence. Since $g_k=\sum_{i=1}^n\|\mb{g}_i^k\|\leq C\alpha_k$ and $\|\nabla \mb{f}_i^k\|\leq B$, we have
	\begin{align}
	s_1&=-\sum_{i=1}^n\left\langle\mb{g}_i^k,\mb{g}_i^k+\alpha_k\nabla \mb{f}_i^k\right\rangle\leq B\alpha_k\sum_{i=1}^n\left\|\mb{g}_i^k\right\|=BC\alpha_k^2;\nonumber\\
	s_2&\leq(B+C)\alpha_k\sum_{i=1}^n\left\|\overline{\mb{z}}^{k+1}-\mb{x}_i^{k+1}\right\|.\nonumber
	\end{align}
	Using the result of Lemma \ref{lem_NonexpanBregman}(a), we have for any $i$
	\begin{align}
	\left\langle\mb{x}_i^{k+1}-\mb{x}^*,\mb{g}_i^k+\alpha_k\nabla \mb{f}_i^k\right\rangle\leq0,\nonumber
	\end{align}
	which reveals that $s_3\leq0$. Using the upperbound of $s_1$, $s_2$, and $s_3$ in the preceding relations and the fact that $g_k=\sum_{i=1}^n\|\mb{g}_i^k\|\leq C\alpha_k$, we derive from Eq.~\eqref{mr_eq3} that
\begin{align}
	\frac{2\alpha_k}{n}\left(f(\overline{\mb{z}}^{k})-f^*\right)\leq&\left\|\overline{\mb{z}}^k-\mb{x}^*\right\|^2-\left\|\overline{\mb{z}}^{k+1}-\mb{x}^*\right\|^2+\frac{C^2}{n^2}\alpha_k^2\nonumber\\
	&+\frac{4B\alpha_k}{n}\sum_{i=1}^n\left\|\overline{\mb{z}}^{k}-\mb{x}_i^{k}\right\|+\frac{2BC}{n}\alpha_k^2\nonumber\\
	&+\frac{2(B+C)}{n}\alpha_k\sum_{i=1}^n\left\|\overline{\mb{z}}^{k+1}-\mb{x}_i^{k+1}\right\|.\nonumber
\end{align}
By summing the preceding relation over~$k$, we have that
\begin{align}
&\sum_{k=1}^\infty\frac{2\alpha_k}{n}\left(f(\overline{\mb{z}}^{k})-f^*\right)\leq\left\|\overline{\mb{z}}^1-\mb{x}^*\right\|^2\nonumber\\
&+\left(\frac{C^2}{n^2}+\frac{2BC}{n}\right)\sum_{k=1}^\infty\alpha_k^2+\frac{4B}{n}\sum_{i=1}^n\sum_{k=1}^\infty\alpha_k\left\|\overline{\mb{z}}^{k}-\mb{x}_i^{k}\right\|\nonumber\\
&+\frac{2(B+C)}{n}\sum_{i=1}^n\sum_{k=1}^\infty\alpha_k\left\|\overline{\mb{z}}^{k+1}-\mb{x}_i^{k+1}\right\|.\label{mr_eq5}
\end{align}
Since that the step-size follows $\sum_{k=1}^\infty\alpha_k^2<\infty$ and $\sum_{k=1}^\infty\alpha_k\|\overline{\mb{z}}^{k}-\mb{x}_i^{k}\|<\infty$, from Eq.~\eqref{lm_err1eq1}, we obtain that
\begin{align}
\sum_{k=1}^\infty\frac{2\alpha_k}{n}\left(f(\overline{\mb{z}}^{k})-f^*\right)<\infty,
\end{align}
which reveals that $\lim_{k\rightarrow\infty}f(\overline{\mb{z}}^{k})=f^*$ as~$\sum_{k=1}^\infty\alpha_k=\infty$; the proof follows from Lemma~\ref{lem_consensus2}. 
\end{proof}
\subsection{Convergence Rate}
We now characterize the convergence rate with $\alpha_k=\frac{1}{k^a}$, and $a>0$. Let~$f_K^*:=\min_{0<k\leq K}f(\overline{\mb{z}}^k)$, we have
\begin{align}\label{rate_ineq}
(f_K^*-f^*)\sum_{k=1}^K\alpha_k\leq\sum_{k=1}^K\alpha_k(f(\overline{\mb{z}}^k)-f^*).
\end{align}
By combining Eqs.~\eqref{lem_consensus2_eq},~\eqref{mr_eq5} and~\eqref{rate_ineq}, Eq.~\eqref{mr_eq5} leads to
\begin{align}
(f_K^*-f^*)\sum_{k=1}^K\alpha_k\leq C_1+C_2\sum_{k=1}^K\alpha_k^2,\nonumber
\end{align}
or equivalently,
\begin{align}\label{rate}
(f_K^*-f^*)\leq\frac{ C_1}{\sum_{k=1}^K\alpha_k}+\frac{C_2\sum_{k=1}^K\alpha_k^2}{\sum_{k=1}^K\alpha_k},
\end{align}
where the constants,~$C_1$ and~$C_2$, are given by
{\small
\begin{equation*}
C_1=\frac{n}{2}\left\|\overline{\mb{z}}^0-\mb{x}^*\right\|^2,
C_2=\frac{C^2}{2n}+BC+\left(3B+C\right)\left(\frac{\Gamma C\gamma}{1-\gamma}+1\right).
\end{equation*}
}
Assume the diminishing step-size, $\alpha_k=\frac{1}{k^a}$, with $a>0$.

\noindent (i) When $0<a<\frac{1}{2}$, the first term in Eq.~\eqref{rate}  leads to
\begin{align}
\frac{ C_1}{\sum_{k=1}^K\alpha_k}<C_1\frac{1-a}{K^{1-a}-1}=O\left(\frac{1}{K^{1-a}}\right),\nonumber
\end{align}
while the second term in Eq.~\eqref{rate} leads to
\begin{align}
\frac{C_2\sum_{k=1}^K\alpha_k^2}{\sum_{k=1}^K\alpha_k}<C_2\frac{(1-a)(K^{1-2a}-2a)}{(1-2a)(K^{1-a}-1)}=O\left(\frac{1}{K^a}\right).\nonumber
\end{align}
Considering that $0<a<\frac{1}{2}$, we have $O\left(\frac{1}{K^a}\right)$ dominates since it decreases slower than $O\left(\frac{1}{K^{1-a}}\right)$. 

\noindent (ii) When~$\alpha_k=k^{-1/2}$, the first term in Eq.~\eqref{rate}  leads to
\begin{align}
\frac{ C_1}{\sum_{k=1}^K\alpha_k}<C_1\frac{1/2}{K^{1/2}-1}=O\left(\frac{1}{\sqrt{K}}\right),\nonumber
\end{align}
while the second term in Eq.~\eqref{rate} leads to
\begin{align}
\frac{C_2\sum_{k=1}^K\alpha_k^2}{\sum_{k=1}^K\alpha_k}<C_2\frac{
	1+\ln K}{2(\sqrt{K}-1)}=O\left(\frac{\ln K}{\sqrt{K}}\right).\nonumber
\end{align}
It can be observed that $O\left(\frac{\ln K}{\sqrt{K}}\right)$ dominates.

\noindent (iii) When $\frac{1}{2}<a<1$, the first term in Eq.~\eqref{rate}  leads to
\begin{align}
\frac{ C_1}{\sum_{k=1}^K\alpha_k}<C_1\frac{1-a}{K^{1-a}-1}=O\left(\frac{1}{K^{1-a}}\right),\nonumber
\end{align}
while the second term in Eq.~\eqref{rate} leads to
\begin{align}
\frac{C_2\sum_{k=1}^K\alpha_k^2}{\sum_{k=1}^K\alpha_k}<C_2\frac{(1-a)(2a-1/K^{2a-1})}{(2a-1)(K^{1-a}-1)}=O\left(\frac{1}{K^{1-a}}\right).\nonumber
\end{align}
The two terms are in the same order.

\noindent (iv) When $a>1$, the two terms in Eq.~\eqref{rate} approach constant values. Therefore, the persistence conditions of step-size are not satisfied, and convergence of D-DPS is not satisfied.

By comparing (i), (ii), and (iii), we have that $O(\frac{\ln K}{\sqrt{K}})$ is the fastest. In conclusion, the optimal convergence rate is achieved by choosing $\alpha_k=\frac{1}{\sqrt{k}}$, and the corresponding convergence rate is $O(\frac{\ln k}{\sqrt{k}})$. This convergence rate is the same as the distributed projected subgradient method,~\cite{cc_nedic}, solving constrained optimization over undirected graphs. Therefore, the restriction of directed graphs does not effect the convergence speed. 
\section{Numerical Results}\label{s4}
Consider the application of D-DPS for solving a distributed logistic regression problem over a directed graph:
\begin{align}
\mb{x}^*=\underset{\mb{x}\in\mc{X}\subset\mbb{R}^p}{\operatorname{argmin}}\sum_{i=1}^n\sum_{j=1}^{m_i}\ln\left[1+\exp\left(-\left(\mb{c}_{ij}^\top\mb{x}\right)y_{ij}\right)\right],\nonumber
\end{align}
where $\mc{X}$ is a small convex set restricting the value of $\mb{x}$ to avoid overfitting. Each agent $i$ has access to $m_i$ training samples, $(\mb{c}_{ij},y_{ij})\in\mbb{R}^p\times\{-1,+1\}$, where $\mb{c}_{ij}$ includes the~$p$ features of the $j$th training sample of agent $i$, and $y_{ij}$ is the corresponding label. This problem can be formulated in the form of P1 with the private objective function $f_i$ being
\begin{align}
f_i(\mb{x})=\sum_{j=1}^{m_i}\ln\left[1+\exp\left(-\left(\mb{c}_{ij}^\top\mb{x}\right)y_{ij}\right)\right],\quad\mbox{s.t. }\mb{x}\in\mc{X}.\nonumber
\end{align}
In our setting, we have $n=10$, $m_i=10$, for all $i$, and $p=100$. The constrained set is described by a ball in $\mbb{R}^p$. We consider the network topology as the digraph shown in Fig.~\ref{graph}.
\begin{figure}[!h]
	\begin{center}
		\noindent
		\includegraphics[width=1.55in]{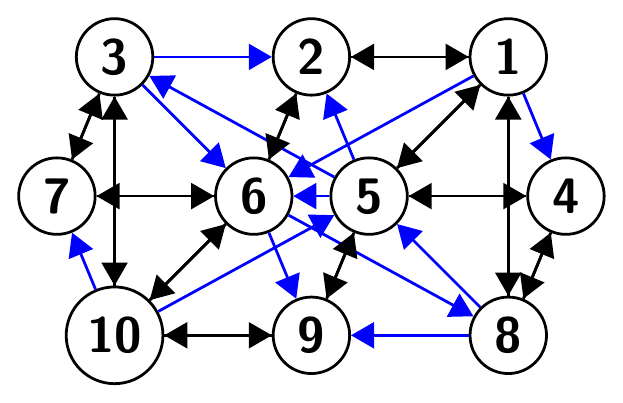}
		\caption{A strongly-connected but non-balanced directed graph.}\label{graph}
	\end{center}
\end{figure}
We plot the residuals~$\frac{\left\|\mb{x}_i^k-\mb{x}^*\right\|_F}{\left\|\mb{x}_i^0-\mb{x}^*\right\|_F}$ for each agent $i$ as a function of $k$ in Fig.~\ref{resid} (Left). In Fig.~\ref{resid} (Right), we show the disagreement between the state estimate of each agent and the accumulation state, and the additional variables of all agents. The experiment follows the results of Lemma \ref{lem_consensus2} that both the disagreements and the additional variables converge to zero. 

We compare the convergence of D-DPS with others related algorithms, Subgradient-Push (SP),~\cite{opdirect_Nedic}, and WeightBalencing Subgradient Descent (WBSD),~\cite{opdirect_Makhdoumi}, in Fig.~\ref{diff_alg}. Since both SP and WBSD are algorithms for unconstrained problems, we reformulate the problem in an approximate form, 
\begin{align}
f_i(\mb{x})=\lambda\|\mb{x}\|^2+\sum_{j=1}^{m_i}\ln\left[1+\exp\left(-\left(\mb{c}_{ij}^\top\mb{x}\right)y_{ij}\right)\right],\nonumber
\end{align}
where the regularization term $\lambda\|\mb{x}\|^2$ is an approximation to replace the original constrained set to avoid overfitting. It can be observed from Fig.~\ref{diff_alg} that all three algorithms have the same order of convergence rate. However, D-DPS is further suited for the constrained problems.
\begin{figure}[!h]
\centering
\subfigure{\includegraphics[width=1.71in]{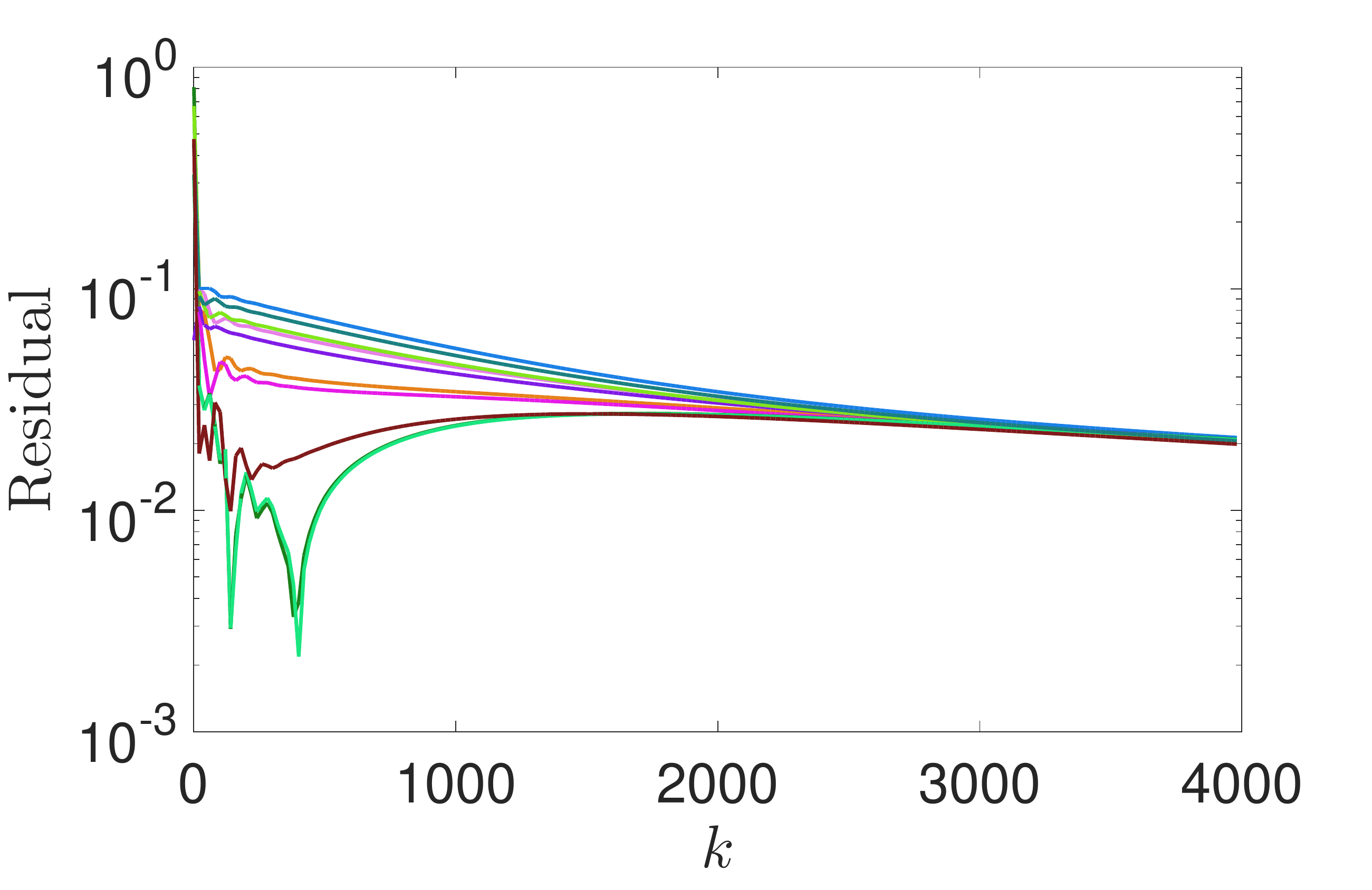}}
\subfigure{\includegraphics[width=1.73in]{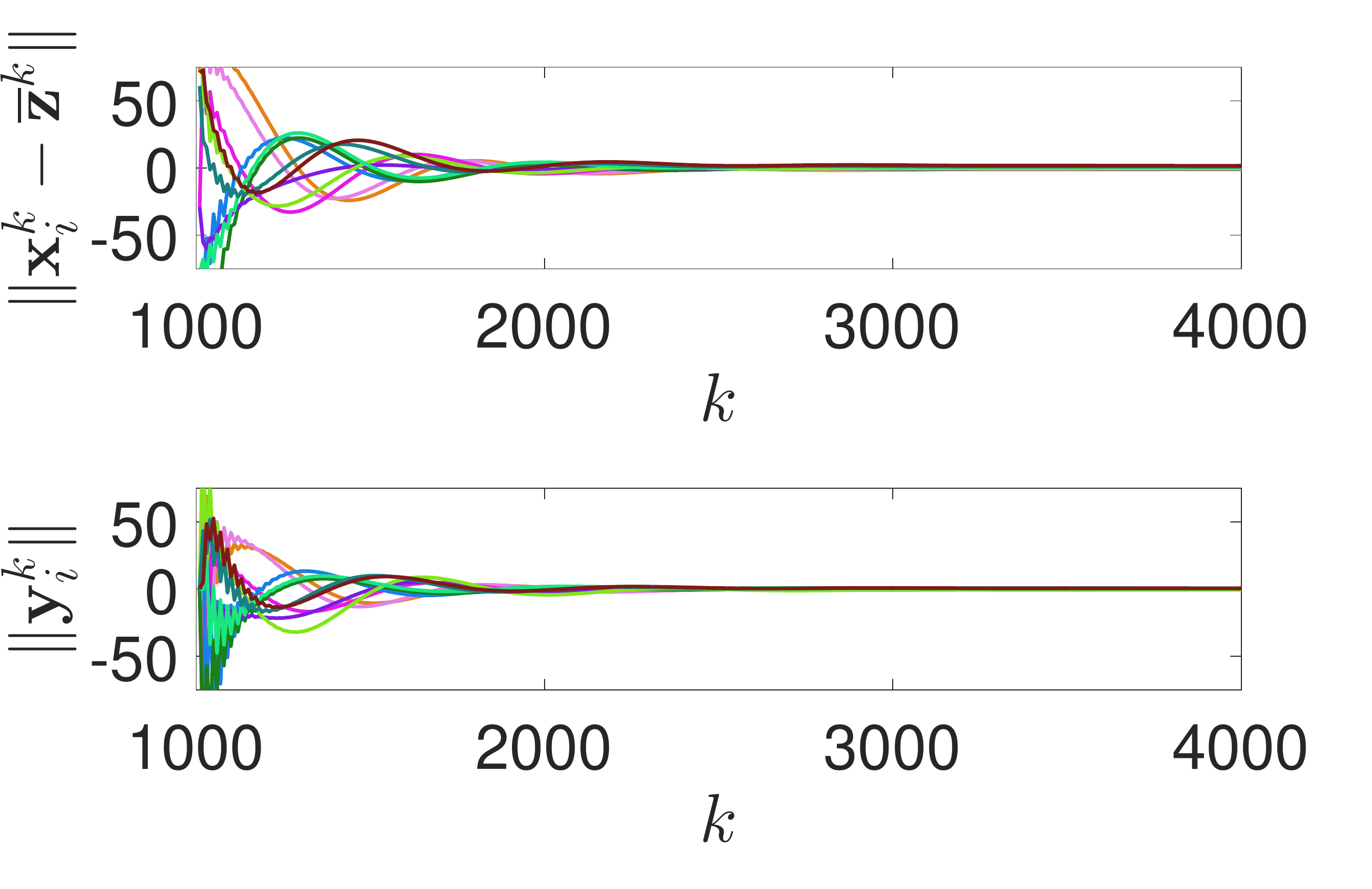}}
\caption{(Left) D-DPS residuals at $10$ agents. (Right) Sample paths of states,~$\|\mb{x}_i^k-\overline{\mb{z}}^k\|$, and~$\|\mb{y}_i^k\|$, for all agents.}
\label{resid}
\end{figure}
\begin{figure}[!h]
	\begin{center}
		\noindent
		\includegraphics[width=2.8in]{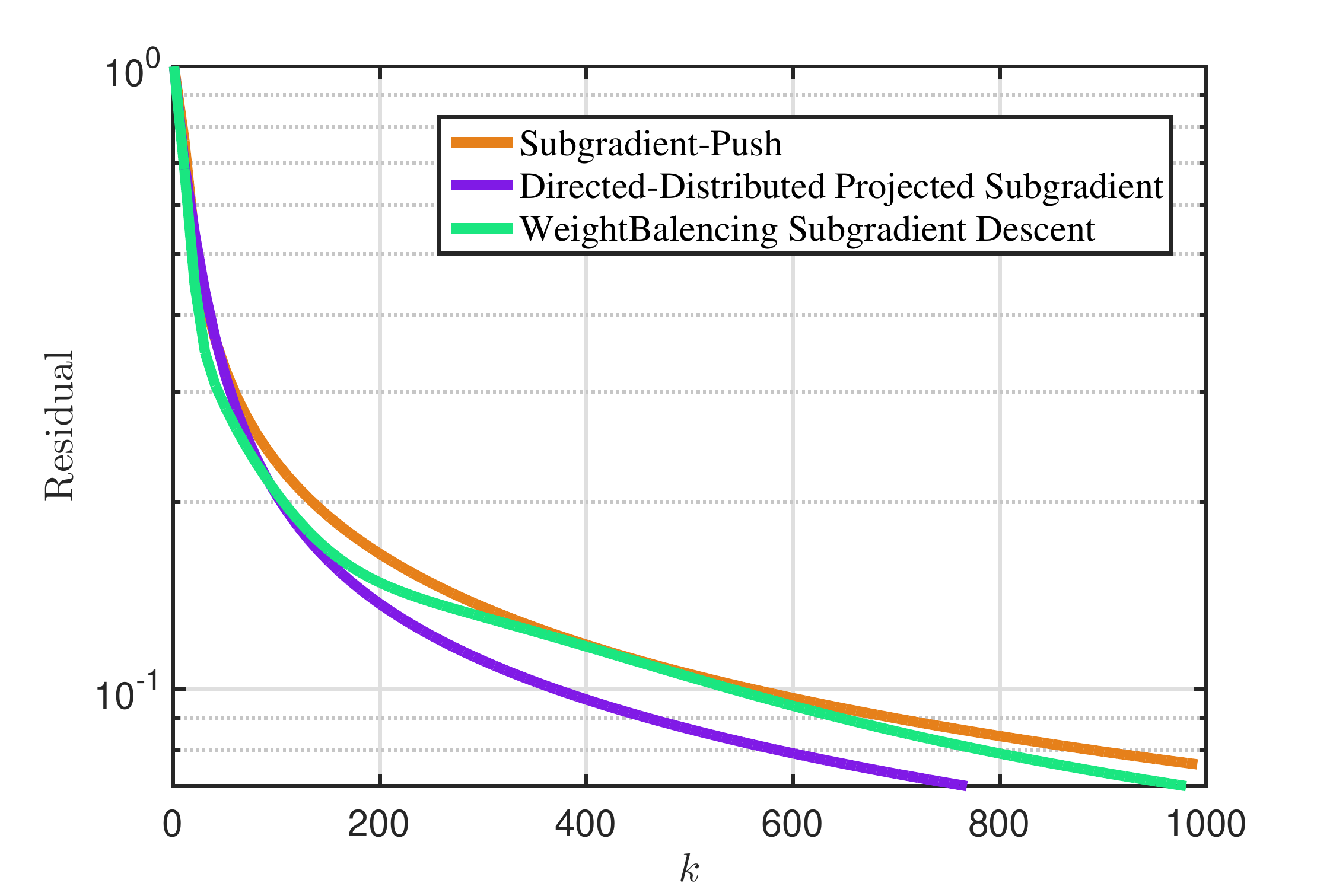}
		\caption{Convergence comparison between different algorithms.}\label{diff_alg}
	\end{center}
\end{figure}

\section{Conclusions}\label{s5}
In this paper, we present a distributed solution, D-DPS, to the \emph{constrained} optimization problem over \emph{directed} multi-agent networks, where the agents' goal is to collectively minimize the sum of locally known convex functions. Compared to the algorithm solving over undirected networks, the D-DPS simultaneously constructs a row-stochastic matrix and a column-stochastic matrix instead of only a doubly-stochastic matrix. This enables all agents to overcome the asymmetry caused by the directed communication network. We show that D-DPS converges to the optimal solution and the convergence rate is $O(\frac{\ln k}{\sqrt{k}})$, where $k$ is the number of iterations. In future, we will consider solving the distributed constrained optimization problems over directed and time-varying graph under, possibly, asynchronous information exchange. 

\footnotesize

\end{document}